\pdfoutput=1
\documentclass[]{article}
\usepackage{amssymb,amsmath,hyperref,verbatim,graphicx,amsthm}
\theoremstyle{remark}

\makeatletter \let\cl@chapter\relax \makeatother
\usepackage{cleveref, xurl}
\usepackage[ruled,vlined,linesnumbered]{algorithm2e}
\usepackage{multirow, longtable}
\usepackage{booktabs}
\usepackage{diagbox}
\usepackage{authblk}
\usepackage{enumitem}
\theoremstyle{plain}
\newtheorem{theorem}{Theorem}[section]
\newtheorem{proposition}[theorem]{Proposition}

\newtheorem{lemma}[theorem]{Lemma}
\usepackage{pdflscape}
\usepackage{mathtools}

\usepackage[dvipsnames]{xcolor}
\usepackage{tikz}
\usepackage{pgfplots}
\pgfplotsset{width=10cm,compat=1.9}
\usepackage{amsmath, amssymb}
\usepackage{booktabs}
\usepackage[ruled,vlined,linesnumbered]{algorithm2e}
\usepackage{hyperref}
\usepackage{subcaption}
\hypersetup{
    colorlinks=true,
    linkcolor=blue,
    filecolor=magenta,      
    urlcolor=cyan,
}
\usepackage{natbib}
\usepackage{threeparttable}
\usepgfplotslibrary{fillbetween}
\usepackage{comment}
\usetikzlibrary{calc,intersections}
\usetikzlibrary{angles,quotes}

\begin{document}

\newcommand{\red}{\color{red}}
\newcommand{\blue}{\color{blue}}
\newcommand{\green}{\color{green}}
\allowdisplaybreaks

\title{Polyhedral Outer-Approximations for MISOCP: Geometry and Cutting Planes}
\author[]{Yongzheng Dai}
\affil[]{ISyE, Georgia Institute of Technology, Atlanta, GA, USA}
\date{}
\maketitle

\begin{abstract}
Mixed-integer second-order cone programs are commonly solved by polyhedral outer approximation (OA), which iteratively strengthens a linear relaxation of the conic feasible region through cutting planes. We study how such approximations can be constructed more efficiently. First, we analyze the marginal contribution of a newly generated cut relative to cuts already present in the relaxation. Using violation- and volume-based measures, we show that this contribution decreases at least as fast as linearly as the new supporting direction approaches an existing one. Motivated by this geometric analysis, we develop a cut-generation strategy that balances separation depth with angular novelty and admits closed-form constructions. Second, we introduce a progressive-integrality OA framework that proceeds from an LP relaxation through partially integral relaxations before reaching the full MILP, thereby using inexpensive early iterations to strengthen the approximation before later mixed-integer solves. Computational experiments on CBLIB instances and large-scale AC unit-commitment models demonstrate complementary benefits from the proposed cut strategy and progressive integrality, substantially reducing the computational effort of outer approximation.

\end{abstract}

\section{Introduction}
Consider a mixed-integer second-order cone program (MISOCP):
\begin{equation}\label{eq:misocp}
    \min_{x\in \mathbb R^n}\{c^\top x: x\in \mathcal C \cap \mathcal X\},
\end{equation}
where $\mathcal C$ is an intersection of second-order cones and
\[\mathcal X :=\{x\in \mathbb R^n: Ax \leq b, x_j\in \mathbb Z, \forall j\in \mathcal J\}\]
is a bounded mixed-integer polyhedral set. We do not assume any additional structure on Problem~\eqref{eq:misocp}, such as a decomposition or a specific formulation.

MISOCPs arise in a broad range of applications, including assortment optimization \citep{SAK:assortment}, portfolio and risk optimization \citep{ghaoui2003worst}, network design and operations \citep{byeon2019unit}, and statistical learning \citep{kucukyavuz2023consistent}; see \citep{benson2013mixed} for a survey. They also provide useful convex formulations and relaxations for mixed-integer nonlinear problems, with important applications in power systems \citep{bai2015decomposition,dai2026solving,dai2026scheduling}. Although continuous SOCPs can be solved efficiently, the presence of discrete variables makes MISOCPs computationally challenging \citep{bonami2011algorithms}. Representative methods include lifted polyhedral approximations of second-order cones \citep{ben2001polyhedral}, branch-and-bound for mixed-integer conic quadratic programs \citep{vielma2008lifted}, conic mixed-integer cuts \citep{atamturk2010conic}, and more recent polyhedral and conic outer-approximation frameworks \citep{lubin2018polyhedral,coey2020outer}. Also, decomposition methods for structure-specific problems, e.g., Lagrangian relaxation (LR) methods \citep{muckstadt1977application,zhuang2002towards,tuncer2023an} and Benders decomposition \citep{nasri2015network,constante2024security}.

In this work, we study Problem~\eqref{eq:misocp} through a polyhedral OA framework. In OA, the conic region $\mathcal C$ is replaced by a polyhedral relaxation $\mathcal P\supseteq\mathcal C$, which is iteratively strengthened by valid supporting inequalities generated from relaxation solutions. A generic OA is summarized as Alg.~\ref{Alg:oa} in Appendix~\ref{app:whole_algorithm}.

Polyhedral outer approximation has a long history in convex optimization and convex mixed-integer nonlinear programming (MINLP), tracing back to cutting-plane methods of Kelley \citep{kelley1960cutting} and Veinott \citep{veinott1967supporting}, and to classical outer-approximation (OA) and linear/nonlinear-programming-based methods for convex MINLP \citep{duran1986outer,quesada1992lp,fletcher1994solving}. Supporting-hyperplane variants were further developed by Kronqvist et al \citep{kronqvist2016extended} and geometrically related to Kelley’s cutting-plane method \citep{serrano2020relation}. These methods focus on the construction of the polyhedral approximation. Adding valid cuts can tighten the relaxation, but a large cut set also increases the size and solution cost of the resulting MILPs. Consequently, cut management is an important component of modern MIP algorithms \citep{abhishek2010filmint,turner2023adaptive}. Nevertheless, existing strategies are largely based on measures of individual cut quality or heuristic filtering. Recent work has further emphasized that the strength of an inequality should be assessed with respect to the polyhedron that it strengthens \citep{warme2026quantitativeI,warme2026quantitativeII}. In an iterative OA, cuts are accumulated; hence, the value of a newly generated cut should naturally be considered relative to those already present. Motivated by this perspective, we quantify the marginal contribution of a new SOC supporting cut relative to the existing supporting directions and use this analysis to guide cut generation and management.

Our main contributions are as follows.
\begin{enumerate}
    \item We study the efficiency of a newly generated supporting cut relative to the cuts already present in the polyhedral relaxation. Using both violation- and volume-based measures, we show that the marginal improvement provided by a new cut decreases at least as fast as linearly as the new supporting direction approaches an existing one.
    \item Motivated by the geometric analysis, we formulate cut generation as a tradeoff between separation depth at the current infeasible solution and angular separation from existing supporting directions. We derive closed-form characterizations of the resulting cut and develop an adaptive cut-generation with cut management strategy.
    \item To reduce the cost of repeatedly solving full MILP relaxations, we introduce a progressive-integrality framework that moves from an LP relaxation through partially integral relaxations to the full MILP. We discuss the computational benefit of the framework to solve subsequent full MILPs.
    \item Experiments on MISOCP instances from CBLIB \citep{friberg2016cblib} and large-scale AC unit-commitment models demonstrate the complementary benefits of progressive integrality and the proposed cut-generation strategy.
\end{enumerate}

The remainder of the paper is organized as follows. Section~\ref{sec:cutting_plane} develops the geometric analysis of SOC supporting cuts and the depth-efficiency balanced cut-generation strategy. Section~\ref{sec:progress_int} introduces the progressive-integrality OA framework and the associated primal-recovery procedure. Section~\ref{sec:experiement} presents the computational experiments. Proofs, algorithmic details, implementation settings, and extended computational results are provided in the appendices.

\section{Cutting Planes for Second-Order Cone}\label{sec:cutting_plane}

This section studies supporting cuts for polyhedral outer approximations of the second-order cone, with particular emphasis on the marginal contribution of a new cut relative to those already present in the relaxation. Since the analysis in this section concerns a single second-order cone, we write SOC as
\[L^N := \{(x, s)\in \mathbb{R}^{N-1}\times \mathbb{R}_+: \|x\|_2 \leq s\}.\]

\subsection{Geometry of the Deepest SOC Cut}
Let $(x^{(1)}, s^{(1)}) \not\in L^N$. Among all valid linear inequalities for  $L^N$, the deepest cut at $(x^{(1)}, s^{(1)})$ is obtained from the separation problem
\begin{equation}\label{eq:deep_cut_problem}
    \begin{aligned}
        \max_{a,b}\ &a^\top (x^{(1)}, s^{(1)}) - b\\
        \mbox{s.t.}\ &a^\top (x, s) \leq b,\ \forall (x,s)\in L^N,\\
        &\|a\|_2 \leq 1.
    \end{aligned}
\end{equation}
Building on Proposition 5 of Bienstock and Villagra \citep{bienstock2026accurate}, the following result gives an equivalent geometric characterization of the deepest cut.
\begin{proposition}\label{prop:deepest_cut}
    Consider $(x^{(1)}, s^{(1)}) \not\in L^N$ with $s^{(1)} \geq 0$. Let $(y^{(1)}, t^{(1)}) := \mathrm{Proj}_{L^N}(x^{(1)}, s^{(1)})$. Then $\frac{y^{(1)}}{\|y^{(1)}\|_2} = \frac{x^{(1)}}{\|x^{(1)}\|_2}$, and the inequality 
    \[(y^{(1)}/\|y^{(1)}\|_2)^\top x\leq s\] 
    is (a) the deepest cut solving Problem~\eqref{eq:deep_cut_problem} after scaling by $1/\sqrt{2}$; (b) a supporting inequality of $L^N$; (c) the gradient cut generated at the projection point $(y^{(1)}, t^{(1)})$.
\end{proposition}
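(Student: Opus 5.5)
We would begin by computing the projection explicitly; parts (a)--(c) then reduce to short verifications. Since $(x^{(1)},s^{(1)})\notin L^N$ and $s^{(1)}\ge 0$, we have $\|x^{(1)}\|_2>s^{(1)}\ge 0$, so $x^{(1)}\neq 0$. The polar cone of $L^N$ is $-L^N=\{(x,s):\|x\|_2\le -s\}$. Our point lies in it only if $x^{(1)}=0$ and $s^{(1)}=0$, which is excluded, so the point is in neither $L^N$ nor $-L^N$. The standard projection formula for the second-order cone, which we would verify via the Moreau decomposition (orthogonality of the two pieces plus membership in $L^N$ and $-L^N$), then gives
\[
(y^{(1)},t^{(1)})=\frac{\|x^{(1)}\|_2+s^{(1)}}{2}\Bigl(\frac{x^{(1)}}{\|x^{(1)}\|_2},\,1\Bigr),
\]
with $\|x^{(1)}\|_2+s^{(1)}>0$. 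Hence $y^{(1)}\neq 0$, $y^{(1)}/\|y^{(1)}\|_2=x^{(1)}/\|x^{(1)}\|_2$, and $\|y^{(1)}\|_2=t^{(1)}$. Establishing this formula cleanly is the step I expect to need the most care; the rest is routine.

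For (a), we would reduce Problem~\eqref{eq:deep_cut_problem} to a conic problem. Validity of $a^\top(x,s)\le b$ on the cone $L^N$ forces $b\ge 0$ and $a\in -L^N$. Since $b$ enters the objective with a negative sign, $b=0$ is optimal. Write $a=(u,v)$ with $\|u\|_2\le -v$ and $\|u\|_2^2+v^2\le 1$. The objective is $u^\top x^{(1)}+v s^{(1)}$. Because $s^{(1)}\ge 0$, for fixed $u$ it is best to take $v=-\|u\|_2$, which is feasible iff $\|u\|_2\le 1/\sqrt2$. Cauchy--Schwarz then gives
\[
u^\top x^{(1)}+v s^{(1)}\le \|u\|_2\bigl(\|x^{(1)}\|_2-s^{(1)}\bigr)\le \bigl(\|x^{(1)}\|_2-s^{(1)}\bigr)/\sqrt2 ,
\]
using $\|x^{(1)}\|_2-s^{(1)}>0$. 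Equality holds for $u=x^{(1)}/(\sqrt2\|x^{(1)}\|_2)$ and $v=-1/\sqrt2$. The resulting cut is $\tfrac1{\sqrt2}\bigl[(x^{(1)}/\|x^{(1)}\|_2)^\top x - s\bigr]\le 0$, which is the stated inequality scaled by $1/\sqrt2$ by the first part. As a sanity check, the optimal depth $(\|x^{(1)}\|_2-s^{(1)})/\sqrt2$ equals $\|(x^{(1)},s^{(1)})-(y^{(1)},t^{(1)})\|_2$.

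For (b), validity follows from $(y^{(1)}/\|y^{(1)}\|_2)^\top x\le\|x\|_2\le s$ on $L^N$. The inequality is tight at $(y^{(1)},t^{(1)})\in L^N$ because $\|y^{(1)}\|_2=t^{(1)}$, so it is supporting. For (c), take $g(x,s)=\|x\|_2-s$, which is differentiable at $(y^{(1)},t^{(1)})$ since $y^{(1)}\neq0$, with gradient $(y^{(1)}/\|y^{(1)}\|_2,-1)$. The gradient cut is
\[
g(y^{(1)},t^{(1)})+\nabla g(y^{(1)},t^{(1)})^\top\bigl((x,s)-(y^{(1)},t^{(1)})\bigr)\le 0 .
\]
Here $g(y^{(1)},t^{(1)})=0$ and $\nabla g(y^{(1)},t^{(1)})^\top (y^{(1)},t^{(1)})=\|y^{(1)}\|_2-t^{(1)}=0$. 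So the cut reduces exactly to $(y^{(1)}/\|y^{(1)}\|_2)^\top x\le s$.
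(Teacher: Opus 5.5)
Your argument is correct, and its skeleton matches the paper's proof. Both write the projection in closed form and read off $y^{(1)}/\|y^{(1)}\|_2=x^{(1)}/\|x^{(1)}\|_2$ and $\|y^{(1)}\|_2=t^{(1)}$. Both check that the hyperplane passes through $(y^{(1)},t^{(1)})$ for (b), and expand the first-order cut of $g(x,s)=\|x\|_2-s$ for (c).

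You genuinely differ on the two substantive ingredients, the projection formula and the closed form of the deepest cut. The paper does not prove either; it imports both from Proposition~5 of Bienstock and Villagra. You derive the projection from the Moreau decomposition with respect to $L^N$ and its polar $-L^N$. You derive the deepest cut by noting that validity on a cone is equivalent to $b\ge 0$ and $a\in -L^N$. You then bound $u^\top x^{(1)}+v s^{(1)}$ using $v\le -\|u\|_2$, $\|u\|_2\le 1/\sqrt2$ and Cauchy--Schwarz. You also prove validity in (b) directly via $(y^{(1)}/\|y^{(1)}\|_2)^\top x\le\|x\|_2\le s$, whereas the paper only shows tightness at the projection point and leaves validity implicit in (a).

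The paper's route is shorter but depends entirely on the external result. Yours is self-contained and shows exactly where $s^{(1)}\ge 0$ enters. It forces $x^{(1)}\neq 0$, keeps the point out of $-L^N$ so the generic projection formula applies, and justifies the step $v s^{(1)}\le -\|u\|_2 s^{(1)}$. Your sanity check that the optimal depth equals $\|(x^{(1)},s^{(1)})-(y^{(1)},t^{(1)})\|_2$ is also useful. It proves the distance identity \eqref{eq:vio_dist}, which the paper later cites from the same source and uses in Theorems~\ref{thm:dist_bound} and \ref{thm:max_vio_reduction}.
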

\begin{proof}
    See Appendix.~\ref{app:prop:deepest_cut}. \qed
\end{proof}

Proposition~\ref{prop:deepest_cut} shows that the conventional deepest cut admits three equivalent interpretations: it maximizes the separation of the current infeasible point, it supports the cone at its Euclidean projection, and it coincides with the first-order cut generated at that projection. In particular, its supporting direction is aligned with $x^{(1)}$. OA based on such cuts converges to a global optimum or to a prescribed accuracy in finite time; see, e.g., \citep{veinott1967supporting,kronqvist2016extended}.

We next examine what this cut implies for subsequent relaxation solutions. After adding the cut generated from $(x^{(1)}, s^{(1)})$, define
\[P := \{(x,s)\in \mathbb{R}^{N-1}\times \mathbb{R}_+: (y^{(1)}/\|y^{(1)}\|_2)^\top x\leq s\} \supseteq L^N.\]

\begin{theorem}\label{thm:dist_bound}
    Let $(x^{(2)}, s^{(2)}) \in P$. Then we have
    \begin{equation}\label{eq:euclid_bound}
        \mathrm{dist}((x^{(2)}, s^{(2)}), L^N)\leq \frac{\sqrt{2}\|x^{(2)} - y^{(1)}\|_2}{2}.
    \end{equation}
    Furthermore, for $x^{(2)} \neq 0$, let $\theta\in[0,\pi]$ denote the angle between $y^{(1)}$ and $x^{(2)}$. Then
    \begin{equation}\label{eq:angle_bound}
        \mathrm{dist}((x^{(2)}, s^{(2)}), L^N)\leq \frac{\sqrt{2}(1-\cos(\theta))\|x^{(2)}\|_2}{2}.
    \end{equation}
\end{theorem}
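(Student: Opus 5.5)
The plan rests on the closed-form Euclidean distance from a point to $L^N$, applied to a point of $P$. Throughout, write $u := y^{(1)}/\|y^{(1)}\|_2$, $r := \|y^{(1)}\|_2$, so that $y^{(1)} = r u$ by Proposition~\ref{prop:deepest_cut}. I am assuming $y^{(1)}\neq 0$, which is implicit in the statement since the cut is normalized by $\|y^{(1)}\|_2$.

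First I will recall the standard formula for projecting onto the second-order cone. For a point $(x,s)$ with $\|x\|_2 > |s|$, the projection is $\tfrac{\|x\|_2+s}{2}\bigl(x/\|x\|_2,\,1\bigr)$, and
\[
\mathrm{dist}((x,s),L^N) = \frac{\|x\|_2 - s}{\sqrt{2}}.
\]
If instead $\|x\|_2 \le s$, the distance is $0$. Since points of $P$ satisfy $s\ge 0$, the case $s<-\|x\|_2$ cannot occur. Hence for every $(x^{(2)},s^{(2)})\in P$,
\[
\mathrm{dist}\bigl((x^{(2)},s^{(2)}),L^N\bigr) \le \frac{\max\{0,\ \|x^{(2)}\|_2 - s^{(2)}\}}{\sqrt 2}.
\]
Membership in $P$ gives both $s^{(2)}\ge u^\top x^{(2)}$ and $s^{(2)}\ge 0$, so
\[
s^{(2)} \ge \max\{0,\, u^\top x^{(2)}\}.
\]
Therefore it suffices to bound
\[
g(x) := \|x\|_2 - \max\{0,\,u^\top x\}, \qquad x = x^{(2)}.
\]

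For the angle bound \eqref{eq:angle_bound}, note that $g(x)\le \|x\|_2 - u^\top x$. Since $u^\top x = \|x\|_2\cos\theta$, this equals $(1-\cos\theta)\|x\|_2$. Dividing by $\sqrt2$ gives exactly $\tfrac{\sqrt2}{2}(1-\cos\theta)\|x^{(2)}\|_2$.

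For the Euclidean bound \eqref{eq:euclid_bound}, I decompose $x = \alpha u + w$ with $w\perp u$. Then
\[
\|x - y^{(1)}\|_2^2 = (\alpha - r)^2 + \|w\|_2^2.
\]
I split into two cases on the sign of $\alpha$.
\begin{itemize}
\item If $\alpha\ge 0$, then $g(x) = \sqrt{\alpha^2+\|w\|_2^2}-\alpha \le \|w\|_2 \le \|x-y^{(1)}\|_2$.
\item If $\alpha<0$, then $g(x) = \|x\|_2 = \sqrt{\alpha^2+\|w\|_2^2}$. Because $|\alpha - r| = |\alpha|+r \ge |\alpha|$, this is at most $\|x-y^{(1)}\|_2$.
\end{itemize}
In both cases $g(x)\le\|x-y^{(1)}\|_2$, and dividing by $\sqrt2$ yields \eqref{eq:euclid_bound}.

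The main obstacle is this second bound. The naive estimate $\|x\|_2 - u^\top x\le 2\|x-y^{(1)}\|_2$, from the triangle inequality, loses a factor of $2$. The fix is to use the constraint $s\ge 0$, not just $s\ge u^\top x$, when $x$ points away from $u$. This is why I carry $\max\{0,u^\top x\}$ rather than $u^\top x$ alone.
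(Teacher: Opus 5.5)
Your proof is correct and follows essentially the same route as the paper: the closed-form distance $\mathrm{dist}=(\|x\|_2-s)/\sqrt{2}$, the orthogonal decomposition along $y^{(1)}/\|y^{(1)}\|_2$ with a case split on the sign of that component (using $s^{(2)}\ge 0$ in the negative case), and the direct estimate $\|x^{(2)}\|_2-s^{(2)}\le(1-\cos\theta)\|x^{(2)}\|_2$ for the angular bound. The only cosmetic difference is that your $\max\{0,\cdot\}$ device absorbs the case $(x^{(2)},s^{(2)})\in L^N$, which the paper treats separately.
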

\begin{proof}
    See Appendix.~\ref{app:thm:dist_bound}. \qed
\end{proof}
From \citep[Proposition 5]{bienstock2026accurate}, the violation of $(x^{(2)}, s^{(2)})\not\in L^N$ to the constraint $\|x\|_2 \leq s$ is
\begin{equation}\label{eq:vio_dist}
    \|x^{(2)}\|_2 - s^{(2)} = \sqrt{2}\cdot\mathrm{dist}((x^{(2)}, s^{(2)}), L^N).
\end{equation}
Therefore, Theorem~\ref{thm:dist_bound} controls not only the Euclidean distance to $L^N$, but also the violation of the SOC constraint. In particular, \eqref{eq:euclid_bound} shows that the residual violation is small when $x^{(2)}$ is close to the previous supporting point $y^{(1)}$, while \eqref{eq:angle_bound} provides the corresponding directional interpretation: for a fixed $\|x^{(2)}\|_2$, the residual violation vanishes as the angle $\theta$ approaches zero.

This observation reveals a limitation of evaluating a new cut solely by its separation depth at the current infeasible point. If the corresponding supporting direction is already well represented by an existing cut, the current polyhedral approximation may already be accurate in that direction, and an additional nearby cut may provide little new information. This motivates studying the marginal contribution of a new supporting cut relative to existing cuts.

\subsection{Marginal Efficiency of Supporting Cuts}

Suppose that $m$ supporting cuts, associated with $y^{(1)}, \ldots, y^{(m)}$, have already been added to the relaxation:
\[P :=\{(x,s)\in \mathbb{R}^{N-1}\times \mathbb{R}_+: (y^{(i)}/\|y^{(i)}\|_2)^\top x\leq s, \forall i=1,...,m\} \supseteq L^N.\]

We consider the marginal contribution of an additional supporting cut generated from $y^{(m+1)}$. Marginal efficiency measures the improvement provided by the new cut relative to the cuts already present in $P$. We study this improvement from two complementary perspectives: reduction of an upper bound on the residual SOC violation and reduction of the volume of the polyhedral relaxation.

\paragraph{Violation-based measure.} For any point $(\bar x, \bar s) \in P\backslash L^N$, define
\[\Delta(\bar x, \bar s, P) := \min_{i=1,...,m}\{\|\bar x - y^{(i)}\|_2\} \geq \|\bar x\|_2 - \bar s,\]
where the second inequality is from Equations~\eqref{eq:dist_bound} and \eqref{eq:vio_dist}, and hence $\Delta(\bar x, \bar s, P)$ provides an upper bound on the SOC violation of $(\bar x, \bar s)$.

A scale-invariant characterization follows from the supporting directions. Let $\theta_i \in [0,\pi]$ denote the angle between $\bar x$ and $y^{(i)}$, and define
\[\Pi(\bar x, \bar s, P) := \|\bar x\|_2(1- \max_{i=1,...,m} \cos(\theta_i)) \geq \|\bar x\|_2 - \bar s,\]
where the second inequality is from Equations~\eqref{eq:angle_bound} and \eqref{eq:vio_dist}, and hence $\Pi(\bar x, \bar s, P)$ provides an angular upper bound on the residual violation and depends only on the directions represented by the existing supporting cuts.

\begin{theorem}\label{thm:max_vio_reduction} 
    Suppose we add a new cut based on $y^{(m+1)}$ to $P$ to get $\bar P$. Let  $(\bar x, \bar s) \in \bar P\backslash L^N$. Then,
    \begin{equation}\label{eq:max_vio_reduction}
        \Delta(\bar x, \bar s, P) - \Delta(\bar x, \bar s, \bar P) \leq \min_{i=1,...,m}\{\|y^{(i)} - y^{(m+1)}\|_2\}.
    \end{equation}
    Furthermore, let $\sigma_i \in [0,\pi]$ be the angle between $y^{(m+1)}$ and $y^{(i)}$ for $i= 1,...,m$, and then 
    \begin{equation}\label{eq:angle_max_reduction}
        \Pi(\bar x, \bar s, P) - \Pi(\bar x, \bar s, \bar P) \leq 2\|\bar x\|_2\min_{i=1,...,m}\sin(\frac{\sigma_i}{2})
    \end{equation}
\end{theorem}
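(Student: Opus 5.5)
Both inequalities come from the same observation. Adding one cut changes each measure only by replacing a minimum (respectively a maximum) over $i=1,\dots,m$ with one over $i=1,\dots,m+1$. The decrease is therefore zero unless the new index $m+1$ is the unique optimizer. In that case the decrease is bounded by comparing the new term with each old term through a triangle-type inequality, and then taking the best old index.

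\textbf{Inequality \eqref{eq:max_vio_reduction}.} Write $d_i := \|\bar x - y^{(i)}\|_2$. Then $\Delta(\bar x,\bar s,P) = \min_{i\le m} d_i$ and $\Delta(\bar x,\bar s,\bar P) = \min\{\min_{i\le m} d_i,\ d_{m+1}\}$.
\begin{itemize}
\item If the minimum over $i\le m+1$ is attained at some $i\le m$, the left-hand side is $0$. The right-hand side is nonnegative, so the inequality holds.
\item Otherwise the left-hand side equals $\min_{i\le m} d_i - d_{m+1}$. For every $j\le m$, the triangle inequality gives $d_j \le d_{m+1} + \|y^{(j)} - y^{(m+1)}\|_2$. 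Hence
\[
\min_{i\le m} d_i - d_{m+1} \le d_j - d_{m+1} \le \|y^{(j)} - y^{(m+1)}\|_2 .
\]
\item Minimizing over $j$ gives \eqref{eq:max_vio_reduction}.
\end{itemize}
The points $(\bar x,\bar s)$ lie in $\bar P\setminus L^N$, so $\Delta$ is well defined on both $P$ and $\bar P$. Membership in $P$ follows from $\bar P\subseteq P$.

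\textbf{Inequality \eqref{eq:angle_max_reduction}.} Work with unit vectors $u := \bar x/\|\bar x\|_2$ (here $\bar x\neq 0$, since $\|\bar x\|_2 > \bar s \ge 0$) and $v_i := y^{(i)}/\|y^{(i)}\|_2$, so that $\cos\theta_i = u^\top v_i$. Then
\[
\Pi(\bar x,\bar s,P)-\Pi(\bar x,\bar s,\bar P) = \|\bar x\|_2\Bigl(\max_{i\le m+1}\cos\theta_i - \max_{i\le m}\cos\theta_i\Bigr).
\]
\begin{itemize}
\item This difference is $0$ unless index $m+1$ attains the maximum.
\item In that case, for any $j\le m$, the difference is at most
\[
\|\bar x\|_2\,(\cos\theta_{m+1}-\cos\theta_j) = \|\bar x\|_2\, u^\top(v_{m+1}-v_j) \le \|\bar x\|_2\,\|v_{m+1}-v_j\|_2
\]
by Cauchy--Schwarz.
\item The chord length between unit vectors at angle $\sigma_j$ is $\|v_{m+1}-v_j\|_2 = \sqrt{2-2\cos\sigma_j} = 2\sin(\sigma_j/2)$.
\item Minimizing over $j$ gives \eqref{eq:angle_max_reduction}.
\end{itemize}

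There is no serious obstacle. The one step needing care is the case split: the reduction is either zero or controlled by the newly optimal index, so the bound can be taken against the best old cut rather than the one that happened to be optimal before. One might alternatively try to bound $|\theta_{m+1}-\theta_j|\le\sigma_j$ via the spherical triangle inequality and then use the Lipschitz property of $\cos$. That route gives only $\sigma_j$, which is weaker than the chord bound $2\sin(\sigma_j/2)$, so the Cauchy--Schwarz route is the one to use.
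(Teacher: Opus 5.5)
Your proof is correct. For \eqref{eq:max_vio_reduction} it follows the paper's argument: a case split on whether index $m+1$ becomes the minimizer, then the triangle inequality against an old $y^{(j)}$.

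For \eqref{eq:angle_max_reduction} you take a different route. The paper first reaches $\Pi(\bar x,\bar s,P)-\Pi(\bar x,\bar s,\bar P)\le\|\bar x\|_2\min_i|\cos\theta_i-\cos\theta_{m+1}|$, via a case analysis it only indicates with ``similarly''. It then bounds each term using three facts: the spherical triangle inequality $|\theta_i-\theta_{m+1}|\le\sigma_i$, the identity $|\cos\theta_i-\cos\theta_{m+1}|=2\sin(\frac{\theta_i+\theta_{m+1}}{2})\,|\sin(\frac{\theta_i-\theta_{m+1}}{2})|$, and monotonicity of $\sin$ on $[0,\pi/2]$. You instead write $\cos\theta_{m+1}-\cos\theta_j=u^\top(v_{m+1}-v_j)$ and apply Cauchy--Schwarz with the chord length $\|v_{m+1}-v_j\|_2=2\sin(\sigma_j/2)$.

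Your version has several advantages:
\begin{itemize}
\item It is shorter and needs neither the geodesic metric on the sphere nor trigonometric identities.
\item It writes out the case analysis explicitly.
\item It shows that part (2) is the same Lipschitz argument as part (1), with the linear functional $v\mapsto u^\top v$ on unit vectors replacing the distance to $\bar x$.
\end{itemize}

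The paper's route, in exchange, keeps the factor $\sin(\frac{\theta_i+\theta_{m+1}}{2})\le 1$, which your Cauchy--Schwarz step discards. As a result, its intermediate bound is of order $\sigma_j(\theta_j+\theta_{m+1})$ rather than $\sigma_j$ when $\bar x$ is nearly aligned with both directions.

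For the same reason, your closing remark undersells the spherical-triangle approach. It degrades to $\sigma_j$ only if followed by the Lipschitz bound on $\cos$. Combined with the sum-to-product identity, as in the paper, it gives exactly $2\sin(\sigma_j/2)$.
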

\begin{proof}
    See Appendix~\ref{app:thm:max_vio_reduction}. \qed
\end{proof}

Theorem~\ref{thm:max_vio_reduction} quantifies the change in the proposed residual-violation upper bound at a point that remains feasible for the strengthened relaxation. In particular, if $y^{(m+1)}$ approaches an existing direction $y^{(i)}$, then the right-hand side of \eqref{eq:angle_max_reduction} converges to zero.

\paragraph{Volume-based measure.} We next examine the geometric region removed from the current polyhedral approximation. We first illustrate the underlying geometry in two dimensions.

Suppose $N = 3$, $m = 2$, and $s \leq U_r \in \mathbb{R}_+$. In Figure~\ref{fig:example}, we present a slice of the second-order cone in $s = r > 0$. We have two cuts (Cut 1 and Cut 2) that support the circle at points $a,b$ and intersect at point $d$. Let the angle $\angle aob = \theta_1 + \theta_2 < \pi$. The area of the sector of the outer approximation bounded by the two supporting cuts is $r^2\tan{\frac{\theta_1 + \theta_2}{2}}$. Therefore, the volume of $P$ is
\[\mathrm{Vol}(P) := \int_0^{U_r} r^2\tan{\frac{\theta_1 + \theta_2}{2}}\, dr = \frac{U_r^3}{3}\tan{\frac{\theta_1 + \theta_2}{2}}.\]
Now we add a new cut (Cut 3) at point $c$ between $a$ and $b$, and the new cut intersects Cut 1 and Cut 2 at points $e$ and $f$ separately. Let angles $\angle aoc = \theta_1$ and $\angle boc = \theta_2$. Then Cut 3 removes triangle $efd$ from each slice, whose area is 
\[\mathrm{Area}(edf) := r^2\tan{\frac{\theta_1}{2}}\tan{\frac{\theta_2}{2}}\tan{\frac{\theta_1 + \theta_2}{2}}.\]
Therefore, the volume reduction from $P$ to the strengthened relaxation $\bar P$ is
\begin{equation*}
    \begin{aligned}
        \mathrm{Vol}(P) - \mathrm{Vol}(\bar P) &= \int_0^{U_r} r^2\tan{\frac{\theta_1}{2}}\tan{\frac{\theta_2}{2}}\tan{\frac{\theta_1 + \theta_2}{2}}\, dr = \frac{U_r^3}{3}\tan{\frac{\theta_1}{2}}\tan{\frac{\theta_2}{2}}\tan{\frac{\theta_1 + \theta_2}{2}}.
    \end{aligned}
\end{equation*}

\begin{figure}[!htbp]
    \centering
\begin{tikzpicture}[
    scale=4,
    line cap=round,
    line join=round,
    point/.style={circle,fill=black,inner sep=1.2pt},
    tangent/.style={thick},
    auxiliary/.style={dashed,thin}
]
\coordinate (O) at (0,0);
\draw[->] (0,0) -- (1.35,0) node[right] {$x_1$};
\draw[->] (0,0) -- (0,1.35) node[above] {$x_2$};

\draw (1,0.015) -- (1,-0.015) node[below=2pt] {$r$};
\draw (0.015,1) -- (-0.015,1) node[left=2pt] {$r$};

\node[below left] at (0,0) {$o$};

\draw[thick] (1,0)
    arc[start angle=0,end angle=90,radius=1];

\coordinate (A) at ({cos(15)},{sin(15)});
\coordinate (C) at ({cos(30)},{sin(30)});
\coordinate (B) at ({cos(75)},{sin(75)});

\coordinate (D) at
    ({cos(45)/cos(30)},
     {sin(45)/cos(30)});

\coordinate (E) at
    ({cos(22.5)/cos(7.5)},
     {sin(22.5)/cos(7.5)});

\coordinate (F) at
    ({cos(52.5)/cos(22.5)},
     {sin(52.5)/cos(22.5)});

\draw[auxiliary] (0,0) -- (A);
\draw[auxiliary] (0,0) -- (B);
\draw[auxiliary] (0,0) -- (C);


\draw[thick] (E) -- (D);
\draw[thick] (F) -- (D);
\draw[thick] (E) -- (F);

\node[point,label={[below right=1pt]$a$}] at (A) {};
\node[point,label={[right=2pt]$c$}] at (C) {};
\node[point,label={[above left=1pt]$b$}] at (B) {};

\node[point,label={[above right =1.5pt]$d$}] at (D) {};
\node[point,label={[right=2pt]$e$}] at (E) {};
\node[point,label={[above right=-1.5pt]$f$}] at (F) {};
\node[label = {[above, blue] Cut $2$}] at (1.2, 0.72) {};
\node[label = {[above, blue] Cut $1$}] at (0.7, 1.15) {};
\node[label = {[above, red] Cut $3$}] at (0.5, 1.05) {};

\coordinate (Astart) at
    ({cos(15)+0.15*sin(15)},
     {sin(15)-0.15*cos(15)});

\coordinate (Aend) at
    ({cos(15)-0.90*sin(15)},
     {sin(15)+0.90*cos(15)});
\coordinate (Bstart) at
    ({cos(75)+0.90*sin(75)},
     {sin(75)-0.90*cos(75)});

\coordinate (Bend) at
    ({cos(75)-0.15*sin(75)},
     {sin(75)+0.15*cos(75)});
\coordinate (Cstart) at
    ({cos(30)+0.25*sin(30)},
     {sin(30)-0.25*cos(30)});

\coordinate (Cend) at
    ({cos(30)-0.65*sin(30)},
     {sin(30)+0.65*cos(30)});

\path[name path=tangentA] (Astart) -- (Aend);
\path[name path=tangentB] (Bstart) -- (Bend);
\path[name path=tangentC] (Cstart) -- (Cend);
\draw[tangent, blue] (Astart) -- (Aend);
\draw[tangent, blue] (Bstart) -- (Bend);
\draw[tangent, red] (Cstart) -- (Cend);
\pic[
    red,
    thick,
    draw,
    "$\theta_1$",
    angle radius=7mm,
    angle eccentricity=1.5
] {angle=A--O--C};

\pic[
    red,
    thick,
    draw,
    "$\theta_2$",
    angle radius=5mm,
    angle eccentricity=1.45
] {angle=C--O--B};
\end{tikzpicture}
\caption{A Slice of $s = r$, $N = 3$ in \textbf{2D Example}.}
\label{fig:example}
\end{figure}

If either $\theta_1 \to 0$ or $\theta_2 \to 0$, the volume removed by the new cut converges to zero linearly in the corresponding angle. Thus, even in this elementary setting, a cut that approaches an existing supporting direction has vanishing marginal geometric effect. The same phenomenon holds in arbitrary dimension.

\begin{theorem}\label{thm:vol_reduction}
    Suppose we add a new cut based on $y^{(m+1)}$ to $P$ to get $\bar P$. Let $\theta_i \in [0,\pi]$ be the angle between $y^{(m+1)}$ and $y^{(i)}$ for $i= 1,...,m$, and $\bar \theta := \min_{i=1,...,m} \theta_i$. If all variables are bounded, i.e., $|x_i| \leq U \in \mathbb R_+$ and $s \leq U_r \in \mathbb R_+$,
    \begin{equation}\label{eq:vol_reduction}
        \mathrm{Vol}(P) - \mathrm{Vol}(\bar P) = O(\bar\theta).
    \end{equation}
\end{theorem}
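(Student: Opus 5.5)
The proof reduces the volume difference to the volume of a thin slab near the existing cut closest to the new one. Let $k$ attain $\bar\theta=\theta_k$. Write $u_i := y^{(i)}/\|y^{(i)}\|_2$, so that $u_i$ and $u_{m+1}$ are unit vectors with $\|u_{m+1}-u_k\|_2 = 2\sin(\bar\theta/2)\le \bar\theta$. The relaxations are nested: $\bar P = P\cap\{u_{m+1}^\top x\le s\}$. Hence
\[
\mathrm{Vol}(P)-\mathrm{Vol}(\bar P)=\mathrm{Vol}(P\setminus \bar P),
\]
where all volumes are taken inside the bounding box $B:=\{|x_i|\le U,\ 0\le s\le U_r\}$. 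The plan is to show that $P\setminus\bar P$ lies in a thin slab adjacent to the hyperplane of cut $k$, with thickness proportional to $\bar\theta$.

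\textbf{Step 1 (slab containment).} Take any point $(x,s)\in (P\setminus\bar P)\cap B$. It satisfies
\[
u_{m+1}^\top x> s \ge u_k^\top x .
\]
Subtracting, and using Cauchy--Schwarz together with $\|x\|_2\le \sqrt{N-1}\,U$, gives
\[
0\le s-u_k^\top x < (u_{m+1}-u_k)^\top x \le \|u_{m+1}-u_k\|_2\,\sqrt{N-1}\,U \le \sqrt{N-1}\,U\,\bar\theta .
\]
So the removed region lies in $S_\varepsilon:=\{(x,s)\in B: 0\le s-u_k^\top x\le \varepsilon\}$ with $\varepsilon=\sqrt{N-1}\,U\bar\theta$.

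\textbf{Step 2 (volume of the slab).} Fubini applies directly here. For each fixed $x$ with $|x_i|\le U$, the set of admissible $s$ is an interval of length at most $\varepsilon$. Therefore
\[
\mathrm{Vol}(S_\varepsilon)\le (2U)^{N-1}\varepsilon=(2U)^{N-1}\sqrt{N-1}\,U\,\bar\theta .
\]
This bound is $O(\bar\theta)$, with a constant depending only on $N$ and $U$. That proves \eqref{eq:vol_reduction}; the bound $s\le U_r$ is needed only to make both volumes finite.

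\textbf{Expected obstacle.} The argument itself is short, so the main issue is interpretive rather than technical. The volumes must be understood as volumes of the relaxations intersected with the box $B$, since otherwise $\mathrm{Vol}(P)$ is infinite. Once that convention is fixed, no geometric sector computation is needed, unlike in the 2D example. I would also remark that the constant can be made independent of $U$ by using $\|x\|_2\le s\le U_r$. That refinement fails on the removed region, however, because there $\|x\|_2$ may exceed $s$. So I would keep the box bound $U$ as stated in the theorem.
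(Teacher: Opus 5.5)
Your proof is correct. Step 1 is the same key inequality the paper uses: on the removed region, $0\le s-u_k^\top x<(u_{m+1}-u_k)^\top x\le 2\sin(\bar\theta/2)\|x\|_2$, so the removed set lies in a slab along the nearest existing cut. (The paper writes $\bar u$ for your $u_k$.)

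Where you differ is in how the slab's volume is bounded. The paper slices at $s=r$. It treats the slab in $x$-space as having width $2R(r)\sin(\bar\theta/2)$ in the direction $\bar u$. It then bounds the slab's volume by $2R(r)\bar A(r)\sin(\bar\theta/2)$, where $\bar A(r)$ is the supremal cross-sectional area of $P_m(r)$ perpendicular to $\bar u$. Finally it integrates over $r\in[0,U_r]$, asserting that $R(r)$ and $\bar A(r)$ are uniformly bounded.

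You instead apply Fubini along the $s$-axis: each vertical fiber $\{s: 0\le s-u_k^\top x\le\varepsilon\}$ has length at most $\varepsilon$. This gives the explicit constant $(2U)^{N-1}\sqrt{N-1}\,U$ without any cross-sectional area function. Your constant is also independent of $U_r$. The paper's constant $\int_0^{U_r}2R(r)\bar A(r)\,dr$, as written, grows with $U_r$, even though nothing is removed from slices with $r>\sqrt{N-1}\,U$. This is consistent with your correct remark that $s\le U_r$ is needed only to make each volume finite.

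The paper's slice-by-slice formulation has its own advantages. It mirrors the two-dimensional sector computation. It also sets up the picture of a slab in direction $\bar u$ within each slice, which is reused for the matching $\Omega(\bar\theta)$ lower bound in the tightness appendix.
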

\begin{proof}
    See Appendix~\ref{app:thm:vol_reduction}. \qed
\end{proof}
We note that the RHS of \eqref{eq:vol_reduction} can be sharpened to $\Theta(\bar \theta)$ with some additional assumptions, see Appendix~\ref{app:extend_vol_reduction}.

Theorems~\ref{thm:max_vio_reduction} and \ref{thm:vol_reduction} establish the same geometric phenomenon under two different measures of approximation quality. As the direction of a new supporting cut approaches one already represented in the relaxation, both its possible improvement of the violation bound and the volume it removes decrease at least as fast as linearly.

\subsection{Depth-Efficiency Balanced Cut Generation}

The preceding analysis identifies two competing objectives in selecting a new supporting cut. On the one hand, the cut should strongly separate the current infeasible point, favoring the conventional deepest cut. On the other hand, Theorems~\ref{thm:max_vio_reduction} and \ref{thm:vol_reduction} show that a cut whose supporting direction is too close to an existing one can have little marginal effect on the current polyhedral approximation. We therefore seek a supporting cut that balances separation depth against angular novelty relative to the existing cut set.

We first recall the form of all nontrivial supporting hyperplanes of $L^N$.
\begin{lemma}\label{le:support_cut}
    For every supporting hyperplane that touches $L^N$ at a nonzero boundary point $(\bar y, \bar t)$, i.e., $\|\bar y\|_2 = \bar t$, it can be written in that form $\bar y^\top x \leq \|\bar y\|_2 \cdot s$.
\end{lemma}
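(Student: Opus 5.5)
Our plan is to treat a supporting hyperplane $H=\{(x,s): a^\top x + \alpha s = \beta\}$ of $L^N$ at a nonzero boundary point $(\bar y,\bar t)$, with $\|\bar y\|_2=\bar t>0$, and show that, after normalizing, it coincides with $\bar y^\top x \le \|\bar y\|_2 s$. The valid inequality is $a^\top x + \alpha s \le \beta$ for all $(x,s)\in L^N$, with equality at $(\bar y,\bar t)$.

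\textbf{Step 1: $\beta=0$.} Since $L^N$ is a cone containing the origin, validity at $(0,0)$ gives $\beta\ge 0$. Validity along the ray $\lambda(\bar y,\bar t)$ gives $\lambda\beta\le\beta$ for all $\lambda\ge0$, since the left side equals $\lambda\beta$ by the equality at $(\bar y,\bar t)$. Letting $\lambda\to\infty$ forces $\beta\le 0$. Hence $\beta=0$, and the inequality is homogeneous: $a^\top x+\alpha s\le 0$ on $L^N$.

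\textbf{Step 2: identify $(a,\alpha)$.} Validity on the cone means that $-(a,\alpha)$ lies in the dual cone of $L^N$, which is $L^N$ itself by self-duality. Concretely, take $(x,s)=(-\|a\|_2^{-1}a\,s,\,s)$ for $a\neq0$, or any $(x,1)$ with $\|x\|_2 \le 1$; this gives $\|a\|_2\le -\alpha$. Set $\gamma:=-\alpha\ge\|a\|_2$. Equality at $(\bar y,\bar t)$ gives
\[
a^\top\bar y=\gamma\|\bar y\|_2\ge \|a\|_2\|\bar y\|_2 .
\]
By Cauchy--Schwarz, $a^\top \bar y \le \|a\|_2\|\bar y\|_2$. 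Therefore both inequalities are equalities: $\gamma=\|a\|_2$, and $a$ is a nonnegative multiple of $\bar y$.

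\textbf{Step 3: rule out the trivial case and rescale.} If $a=0$ then $\gamma=0$, which is not a hyperplane. So $a=\mu\bar y$ with $\mu>0$ and $\gamma=\mu\|\bar y\|_2$. Dividing by $\mu$ yields $\bar y^\top x\le\|\bar y\|_2 s$.

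The main obstacle is the equality case of Cauchy--Schwarz in Step 2, which pins down the direction of $a$. This is where the assumption of a nonzero touching point ($\bar y\neq0$) is essential: at the apex, many distinct supporting hyperplanes exist.
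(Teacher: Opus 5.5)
Your argument is correct, and it takes a different route from the paper. The paper writes the first-order (gradient) cut of $g(x,s)=\|x\|_2-s$ at $(\bar y,\bar t)$. Using $\nabla g=(\bar y/\|\bar y\|_2,-1)$ and $g(\bar y,\bar t)=0$, this simplifies to $\bar y^\top x\le\|\bar y\|_2\, s$. That shows the tangent hyperplane has the stated form. However, the claim that \emph{every} supporting hyperplane at that point has this form then rests on a standard fact the paper does not state: at a point where $g$ is differentiable with nonzero gradient, the normal cone to $\{g\le 0\}$ is the ray spanned by $\nabla g$.

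Your proof establishes this uniqueness directly, in three moves. Homogeneity of the cone forces $\beta=0$. Self-duality of $L^N$ gives $\|a\|_2\le-\alpha$. The equality case of Cauchy--Schwarz at $(\bar y,\bar t)$ then makes $a$ a positive multiple of $\bar y$ with $-\alpha=\|a\|_2$. The paper's route is shorter and ties directly to the gradient-cut interpretation in Proposition~\ref{prop:deepest_cut}. Yours is self-contained, uses no differentiability, and shows exactly where $\bar y\neq 0$ is needed.

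One small slip: the explicit test point $x=-\|a\|_2^{-1}a\,s$ gives $\alpha\le\|a\|_2$, not $\|a\|_2\le-\alpha$. You want $x=a/\|a\|_2$, $s=1$. The needed inequality still follows from your alternative of taking all $(x,1)$ with $\|x\|_2\le 1$, and from the self-duality argument.
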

\begin{proof}
    See Appendix~\ref{app:le:support_cut}. \qed
\end{proof}

Because positive rescaling of  $\bar y$ leaves the resulting inequality unchanged, we may normalize every supporting direction to unit norm. Henceforth, assume $\|y^{(i)}\|_2 = 1$ for all $i = 1,...,m$.

To incorporate marginal efficiency, we additionally require the new direction to maintain a prescribed angular separation from all existing directions. This leads to the cut-generation problem:
\begin{subequations}\label{eq:new_cut}
    \begin{align}
        \max_y\ &y^\top \bar x - \bar s \label{eq:obj}\\
        \mbox{s.t.}\ &\|y\|_2 = 1, \label{eq:normalized}\\
        &y^\top y^{(i)} \leq \delta,\ i=1,...,m, \label{eq:cos_diff}
    \end{align}
\end{subequations}
where the objective~\eqref{eq:obj} maximizes the violation of $(\bar x, \bar s)$, \eqref{eq:normalized} normalizes $y$ to the unit sphere, and constraint~\eqref{eq:cos_diff} makes the angle between $y$ and $y^{(i)}$ is not less than $\arccos(\delta)$. If the optimal value is positive, the generated cut can separate $(\bar x, \bar s)$; otherwise, the angular requirement must be relaxed before a separating cut of this form can be obtained.

Although \eqref{eq:new_cut} is nonconvex because of the unit-sphere constraint, its geometry admits useful closed-form characterizations. The simplest case occurs when the conventional deepest direction already satisfies all angular constraints.

\begin{lemma}\label{le:deepest_opt}
    If $\frac{\bar{x}^\top y^{(i)}}{\|\bar x\|_2} \leq \delta$ for all $i=1,...,m$, then $\bar y := \frac{\bar{x}}{\|\bar x\|_2}$ is the optimal solution to Problem~(\ref{eq:new_cut}).
\end{lemma}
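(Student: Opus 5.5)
The plan is to view Problem~\eqref{eq:new_cut} as a restriction of a simpler problem in which the angular constraints~\eqref{eq:cos_diff} are dropped. We first solve that relaxation in closed form. We then check that its maximizer is feasible for the restricted problem, which makes it optimal there as well. Throughout we implicitly assume $\bar x \neq 0$, since otherwise $\bar y$ is undefined. This assumption is harmless because $(\bar x,\bar s)\notin L^N$ with $\bar s\ge 0$ forces $\|\bar x\|_2>\bar s\ge 0$.

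\textbf{Step 1 (relaxation).} Consider
\[
\max_y\{\,y^\top \bar x-\bar s:\ \|y\|_2=1\,\}.
\]
The term $\bar s$ is a constant. By the Cauchy--Schwarz inequality, $y^\top\bar x\le \|y\|_2\|\bar x\|_2=\|\bar x\|_2$ for every unit vector $y$, with equality if and only if $y$ is a positive multiple of $\bar x$. On the unit sphere this forces $y=\bar x/\|\bar x\|_2=\bar y$. Hence the relaxation has optimal value $\|\bar x\|_2-\bar s$, attained uniquely at $\bar y$. This is consistent with Proposition~\ref{prop:deepest_cut}, where the deepest direction is shown to be aligned with $\bar x$.

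\textbf{Step 2 (feasibility).} The normalization constraint~\eqref{eq:normalized} holds for $\bar y$ by construction. Constraint~\eqref{eq:cos_diff} reads $\bar y^\top y^{(i)}=\bar x^\top y^{(i)}/\|\bar x\|_2\le\delta$ for each $i$, which is exactly the hypothesis of the lemma. Therefore $\bar y$ is feasible for Problem~\eqref{eq:new_cut}.

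\textbf{Step 3 (conclusion).} The feasible region of~\eqref{eq:new_cut} is contained in the unit sphere, so its optimal value is at most $\|\bar x\|_2-\bar s$. The point $\bar y$ is feasible and attains this value, so it is optimal. Uniqueness is inherited from the equality case of the Cauchy--Schwarz inequality in Step~1.

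There is no substantive obstacle. The only points needing care are the nondegeneracy $\bar x\neq 0$ and the observation that dropping constraints can only increase the optimal value, which justifies the relaxation argument.
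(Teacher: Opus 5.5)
Your proof is correct and follows the same route as the paper: you check that $\bar y$ satisfies \eqref{eq:normalized} and \eqref{eq:cos_diff}, and you observe that $\bar y$ maximizes the objective even when the angular constraints are dropped. The only difference is that you justify this maximality directly by Cauchy--Schwarz on the unit sphere, whereas the paper appeals to $\bar x/\|\bar x\|_2$ being the deepest-cut direction from Proposition~\ref{prop:deepest_cut}, so your version is slightly more self-contained.
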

\begin{proof} 
    See Appendix~\ref{app:le:deepest_opt}. \qed
\end{proof}

Thus, whenever the deepest direction is sufficiently separated from the existing supporting directions, imposing the efficiency constraints incurs no loss in separation depth: the conventional deepest cut remains optimal.

When one or more angular constraints are active, the optimal direction can still be characterized explicitly.

\begin{proposition}\label{prop:general_opt_solution}
    Let $S\subseteq \{1,...,m\}$. Suppose $S$ is the active set for Constraint~\eqref{eq:cos_diff} of an optimal solution to Problem~(\ref{eq:new_cut}), and t he vectors $\{y^{(i)}: i\in S\}$ are linearly independent. Let $p_S = A_S(A_S^\top A_s)^{-1}\delta_S$, $P_S := I - A_S (A_S^\top A_S)^{-1} A_S^\top$,  $A_S = [y^{(i)}]_{i\in S}$, and $\delta_S = [\delta, ..., \delta]\in\mathbb R^{|S|}$. If $P_S \bar x \neq 0$, then the solution
    \begin{equation}\label{eq:general_S_cut}
        y^* = p_S + \sqrt{1-\|p_S\|_2^2}\frac{P_S\bar x}{\|P_S\bar x\|_2},
    \end{equation}
    is optimal if it satisfies \eqref{eq:cos_diff}; if $P_S \bar x = 0$, every feasible $y = p_S + z$ with $z\in\mathrm{Null}(A_S^\top)$ and $\|y\|_2 = 1$ is optimal.
\end{proposition}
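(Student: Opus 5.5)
The plan is to reduce Problem~\eqref{eq:new_cut}, once the active set $S$ is known, to maximizing a linear function over the intersection of the unit sphere with an affine subspace. That problem has an explicit solution.

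Since $S$ is the active set of some optimal $y^{\mathrm{opt}}$, we have $A_S^\top y^{\mathrm{opt}} = \delta_S$, while the constraints indexed by $i\notin S$ hold strictly. First I would argue that $y^{\mathrm{opt}}$ also solves the restricted problem
\[
\max_y\ y^\top \bar x \quad \mbox{s.t.}\quad \|y\|_2=1,\ A_S^\top y = \delta_S .
\]
The inactive constraints are strict and the objective is continuous. So any strictly better point of the restricted problem near $y^{\mathrm{opt}}$ would be feasible for \eqref{eq:new_cut}, which contradicts optimality. This gives local optimality, not global. Because of that, I would simply work with the restricted problem and then check feasibility in \eqref{eq:cos_diff}, as the statement itself does: the candidate is declared optimal \emph{if} it satisfies \eqref{eq:cos_diff}. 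Concretely, I would show that $y^*$ is globally optimal for the restricted problem. The restricted problem is a relaxation of \eqref{eq:new_cut} over the face $\{A_S^\top y=\delta_S\}$, and $y^{\mathrm{opt}}$ lies on that face. Hence $y^{*\top}\bar x \ge y^{\mathrm{opt}\top}\bar x$, and feasibility of $y^*$ then gives optimality.

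To solve the restricted problem, I would use linear independence, which makes $A_S^\top A_S$ invertible. Every $y$ with $A_S^\top y=\delta_S$ decomposes uniquely as $y = p_S + z$ with $z\in \mathrm{Null}(A_S^\top) = \mathrm{Range}(P_S)$. Here $p_S\in\mathrm{Range}(A_S)$ is the minimum-norm solution, and it is orthogonal to $z$. Therefore $\|y\|_2^2 = \|p_S\|_2^2 + \|z\|_2^2$, so the sphere constraint becomes $\|z\|_2 = \sqrt{1-\|p_S\|_2^2}$. Feasibility of $y^{\mathrm{opt}}$ guarantees $\|p_S\|_2\le 1$. The objective splits as
\[
y^\top\bar x = p_S^\top \bar x + z^\top \bar x = p_S^\top\bar x + z^\top P_S\bar x,
\]
using $z=P_S z$ and the symmetry of $P_S$. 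The first term is constant. By Cauchy--Schwarz on the sphere of radius $\sqrt{1-\|p_S\|_2^2}$ inside $\mathrm{Range}(P_S)$, the second term is maximized at
\[
z = \sqrt{1-\|p_S\|_2^2}\,\frac{P_S\bar x}{\|P_S\bar x\|_2}
\]
when $P_S\bar x\ne 0$; this yields \eqref{eq:general_S_cut}. When $P_S\bar x=0$, the objective is constant on the whole feasible set, so every feasible $p_S+z$ attains the same value. That value equals $y^{\mathrm{opt}\top}\bar x$, so every such point that also satisfies \eqref{eq:cos_diff} is optimal.

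The main obstacle is the first step: justifying that knowledge of the active set lets us replace \eqref{eq:new_cut} by the equality-constrained problem. I would handle it with the relaxation-plus-feasibility argument above rather than through KKT multipliers. Note that linear independence of $\{y^{(i)}: i\in S\}$ is exactly the LICQ-type condition one would otherwise need; here it is used only for the invertibility of $A_S^\top A_S$.
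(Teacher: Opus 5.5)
Your proposal is correct and follows essentially the same route as the paper. Both write $y = p_S + z$ with $z\in\mathrm{Null}(A_S^\top)$, use $p_S\perp z$ to turn the sphere constraint into a norm condition on $z$, and maximize $\bar x^\top z = (P_S\bar x)^\top z$ by aligning $z$ with $P_S\bar x$. Your relaxation-plus-feasibility argument (the face-restricted problem contains $y^{\mathrm{opt}}$, so its maximizer is optimal for \eqref{eq:new_cut} once it satisfies \eqref{eq:cos_diff}) makes rigorous a step the paper only asserts; the local-optimality detour is unnecessary.
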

\begin{proof}
    See Appendix~\ref{app:prop:general_opt_solution}. \qed
\end{proof}

Proposition~\ref{prop:general_opt_solution} reduces the optimization of \eqref{eq:new_cut} to identifying its active set. Solving this combinatorial task at every OA iteration would defeat the purpose of inexpensive cut generation. We therefore introduce a special choice of $\delta$ which implies an optimal solution with an analytical form.

Define $\bar \delta := \max_{i\neq j}\sqrt{\frac{{y^{(i)}}^\top y^{(j)}+ 1}{2}}$. Geometrically, if $\theta_{i,j}$ denotes the angle between $y^{(i)}$ and $y^{(j)}$, then $\bar \delta \geq \cos(\theta_{i,j}/2)$.

\begin{theorem}\label{thm:new_cut_rotating}
    Let $i^* = \arg\max_{i}\frac{\bar x^\top y^{(i)}}{\|\bar x\|_2}$, $\delta \in [\bar \delta, 1)$, and $m\geq 2$. If $\frac{\bar x^\top y^{(i^*)}}{\|\bar x\|_2} > \delta$, then the optimal solution of Problem~\eqref{eq:new_cut} is
     \begin{equation}\label{eq:rotate_cut}
         y^* = \delta y^{(i^*)} + \sqrt{1-\delta^2}\frac{(I - y^{(i^*)}{y^{(i^*)}}^\top)\bar x}{\|(I - y^{(i^*)}{y^{(i^*)}}^\top)\bar x\|_2}.
     \end{equation}
\end{theorem}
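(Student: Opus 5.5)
The plan is to solve a relaxation of Problem~\eqref{eq:new_cut} that keeps only the angular constraint for $i^*$. I will show its optimum is exactly \eqref{eq:rotate_cut}, and then show that this point also satisfies every other constraint in \eqref{eq:cos_diff}. Since the relaxation has a larger feasible set, a relaxation optimum that is feasible for \eqref{eq:new_cut} is optimal for \eqref{eq:new_cut}. Throughout, write $u := y^{(i^*)}$, $c := \bar x^\top u/\|\bar x\|_2 > \delta$ and $P := I - uu^\top$.

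\textbf{Step 1: the relaxed problem.} I want to solve $\max\{y^\top \bar x : \|y\|_2 = 1,\ y^\top u \le \delta\}$. Decompose any feasible $y$ as $y = \alpha u + z$ with $z \perp u$, so that $\alpha \le \delta$ and $\|z\|_2 = \sqrt{1-\alpha^2}$. By Cauchy--Schwarz,
\[
y^\top \bar x = \alpha\, u^\top \bar x + z^\top P\bar x \le \alpha c\|\bar x\|_2 + \sqrt{1-\alpha^2}\,\|P\bar x\|_2 =: g(\alpha).
\]
Equality holds when $z$ is a nonnegative multiple of $P\bar x$. Since $\|P\bar x\|_2 = \|\bar x\|_2\sqrt{1-c^2}$, we get $g(\alpha) = \|\bar x\|_2 \cos(\arccos\alpha - \arccos c)$. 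This is increasing in $\alpha$ on $[-1, c]$. Because $\delta < c$, the maximum over $\alpha \le \delta$ is attained at $\alpha = \delta$, with $z = \sqrt{1-\delta^2}\,P\bar x/\|P\bar x\|_2$, which is exactly \eqref{eq:rotate_cut}. This also recovers Proposition~\ref{prop:general_opt_solution} with $S = \{i^*\}$ and $p_S = \delta u$.

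The degenerate case $P\bar x = 0$ occurs only when $c = 1$, i.e.\ $\bar x$ is parallel to $u$. There, formula \eqref{eq:rotate_cut} is undefined and any unit $z \perp u$ is optimal, as in the second clause of Proposition~\ref{prop:general_opt_solution}. I would note that the theorem implicitly assumes $c < 1$, or that any such choice may be taken.

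\textbf{Step 2: feasibility for $j \ne i^*$.} Let $\varphi := \arccos\delta$, so the angle between $y^*$ and $u$ is exactly $\varphi$. Let $\theta_{i^*,j}$ be the angle between $u$ and $y^{(j)}$. The definition of $\bar\delta$ and $\delta \ge \bar\delta$ give $\cos(\theta_{i^*,j}/2) \le \delta$, hence $\varphi \le \theta_{i^*,j}/2$. Angular distance on the unit sphere satisfies the triangle inequality, so
\[
\angle(y^*, y^{(j)}) \ge \theta_{i^*,j} - \angle(y^*, u) = \theta_{i^*,j} - \varphi \ge \varphi .
\]
Since $\cos$ is decreasing on $[0,\pi]$, this gives ${y^*}^\top y^{(j)} \le \cos\varphi = \delta$. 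Thus $y^*$ is feasible for \eqref{eq:new_cut}, and by the relaxation argument it is optimal. The assumption $m \ge 2$ is needed only so that $\bar\delta$ is defined.

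\textbf{Main obstacle.} The delicate point is Step 1: making rigorous that the single constraint is active and that the optimum is unique up to the degenerate parallel case. The monotonicity of $g$ handles this cleanly, so I would rely on it rather than a KKT argument on the nonconvex sphere. The spherical triangle inequality in Step 2 is standard; I would cite it as the metric property of $\arccos(a^\top b)$ on the unit sphere.
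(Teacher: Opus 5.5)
Your proposal is correct and follows the same route as the paper. Both relax to the single angular constraint for $i^*$, identify its maximizer as \eqref{eq:rotate_cut}, and then use $\delta\ge\bar\delta$ (so that $\angle(y^{(i^*)},y^{(j)})\ge 2\arccos\delta$) with the spherical triangle inequality to get feasibility for every $j\neq i^*$.

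The one difference is in solving the relaxation. You do it directly through the monotonicity of $g(\alpha)=\|\bar x\|_2\cos(\arccos\alpha-\arccos c)$. The paper instead asserts that the constraint must be active and cites Proposition~\ref{prop:general_opt_solution}. Your version justifies that activity claim on the nonconvex sphere and gives uniqueness. It also correctly flags the degenerate case $(I-y^{(i^*)}{y^{(i^*)}}^\top)\bar x=0$ (i.e.\ $c=1$), where the stated formula is undefined.
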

\begin{proof}
    See Appendix~\ref{app:thm:new_cut_rotating}.\qed
\end{proof}

Equation~\eqref{eq:rotate_cut} has a simple geometric interpretation. Starting from the deepest direction $\frac{\bar x}{\|\bar x\|_2}$, the solution is rotated away from its closest existing supporting direction until ${y^*}^\top y^{(i^*)} = \delta$. Thus the new cut is chosen as close as possible to the deepest direction while satisfying the prescribed angular-separation requirement.

We note that the angular-separation requirement is closely related to the common cut-management strategy of rejecting nearly parallel cuts. Due to the page limit, a detailed discussion of this connection is deferred to Appendix~\ref{app:too_parallel}. 

Combining Lemma~\ref{le:deepest_opt} and Theorem~\ref{thm:new_cut_rotating} yields a cut generation algorithm with cut management. We defer the details of the method (as Alg.~\ref{Alg:cut_generation}) to Appendix~\ref{app:whole_algorithm}.

\section{Progressive-Integrality Outer Approximation}\label{sec:progress_int}

In Alg.~\ref{Alg:oa}, classically, the conic feasible region $\mathcal C$ is replaced by a polyhedral outer approximation $\mathcal P \supset \mathcal C$, and an MILP relaxation
\[\mathcal R := \min_{x\in\mathbb R^n}\{c^\top x: x\in \mathcal P \cap \mathcal X\}\]
is repeatedly solved and strengthened by valid cuts \citep{castillo2016unit,dai2026solving,dai2026scheduling,liu2018global}. When $\mathcal P$ is still a coarse approximation of $\mathcal C$, however, repeatedly enforcing all integrality constraints can be unnecessarily expensive, since subsequent cutting planes may substantially alter the relaxation.

We therefore propose a progressive-integrality OA framework that strengthens the conic approximation while gradually restoring integrality. Once the full integrality level is reached, we additionally use a fixed-integer conic subproblem to recover primal feasible solutions.

\subsection{Progressive Integrality}

Let $R(\mathcal X)$ be the continuous relaxation to $\mathcal X$, and let $\mathcal J$ be the index set of the integer variables. For any $\mathcal I \subseteq \mathcal J$, define the partial integral relaxation
\[\mathcal R_{\mathrm{IG}}(\mathcal I) := \min_{x\in\mathbb R^n}\{c^\top x: x\in \mathcal P \cap R(\mathcal X), x_i \in \mathbb Z,i\in\mathcal I\}.\]
The two extreme cases recover the continuous and full mixed-integer relaxations:
\[\mathcal R_{\mathrm{IG}}(\emptyset) = \mathcal R_{\mathrm{LP}},\ \mathcal R_{\mathrm{IG}}(\mathcal J) = \mathcal R.\]
Moreover, if $\mathcal I_1 \subseteq \mathcal I_2 \subseteq J$, then the feasible region of $\mathcal R_{\mathrm{IG}}(\mathcal I_2)$ is contained in that of $\mathcal R_{\mathrm{IG}}(\mathcal I_1)$. Hence, for the minimization problem, progressively adding integrality constraints yields nondecreasing lower bounds while preserving validity with respect to the original MISOCP.

The procedure starts with $\mathcal I =\emptyset$, corresponding to an \textbf{LP stage}. A similar LP step has been considered for general convex mixed-integer nonlinear programs in \citep{kronqvist2016extended,muts2020decomposition}. Once improvement of the lower bound stalls, we progressively enlarge $\mathcal I$, corresponding to an integrality-generation stage, i.e., \textbf{IG stage}. At each iteration, we select the $p_{\mathrm{frac}}$ most fractional integer variables in the current solution, according to $\min\{\lceil \bar x_i\rceil - \bar x_i, \bar x_i - \lfloor \bar x_i\rfloor\}$ and impose integrality on them. The process continues until the lower-bound improvement again stalls or $\mathcal I = \mathcal J$, at which point the full MILP relaxation is reached. 

The progressive scheme has three main computational motivations. \textbf{First}, it reduces the cost of early OA iterations. LPs and partial MILPs generally require less computational effort than the corresponding full MILP.

\textbf{Second}, it can improve the relaxations encountered by subsequent MILPs after IG stage. Cuts generated during the LP stage directly strengthen the continuous relaxation used at the root node of the later MILP. Furthermore, the most fractional variables are also natural candidates in fractionality-based and hybrid branching strategies \citep{achterberg2009hybrid,achterberg2005branching}, providing a connection between the IG sequence and the upper levels of a subsequent branch-and-bound search \citep{land2009automatic}. The generated cuts are globally valid and therefore strengthen the root and descendant node relaxations, potentially reducing the size of the search tree.

\textbf{Third}, since the LP and partial-MILP relaxations have larger feasible domains than the full MILP relaxation, their optimal solutions may expose SOC infeasibility in a broader range of directions. Consequently, cuts generated across different integrality levels can potentially be more directionally diverse. In view of Section~\ref{sec:cutting_plane}, where nearly parallel supporting cuts were shown to have limited marginal efficiency, such diversity may further improve the resulting polyhedral approximation.

\subsection{Primal Recovery by Fixed-Integer Conic Optimization}\label{sec:primal_recovery}

The LP and IG stages generate valid lower bounds and strengthen the outer approximation, but their solutions may be infeasible. Moreover, as discussed in Appendix~\ref{app:too_parallel}, rejecting nearly parallel cuts may leave a small residual conic violation that cannot be removed by additional admissible cuts. We therefore complement the outer approximation with a primal-recovery step.

After reaching the full MILP stage, let $\bar x \in \mathcal P \cap \mathcal X$ be a solution of the current MILP relaxation. When improvement of the lower bound stalls, we fix its integer variables and solve
\[\bar{\mathcal R}(\bar x) :=\min_{x\in\mathbb R^n}\{c^\top x:x\in \mathcal C\cap \mathcal X, x_j = \bar x_j, \forall j\in\mathcal J\},\]
Since all integer variables are fixed, $\bar{\mathcal R}(\bar x)$ is a continuous convex conic optimization problem. If it is feasible, its solution $\hat x$ provides a primal feasible solution and hence a valid upper bound. If it is infeasible, the primal-recovery step is skipped and OA continues by generating additional cuts.

Combining progressive integrality, the cut-generation and management strategy of Section~\ref{sec:cutting_plane}, and primal recovery yields the complete progressive-integrality OA method. The detailed implementation is summarized in Alg.~\ref{Alg:whole_algorithm} in Appendix~\ref{app:whole_algorithm}.

\section{Experiments}\label{sec:experiement}


We consider two groups of instances: MISOCPs from CBLIB \citep{friberg2016cblib} and AC unit-commitment instances based on the Central Illinois 200-bus and South Carolina 500-bus systems \citep{birchfield2017}. Detailed experimental settings, parameter choices, and instance formulations are deferred to Appendix~\ref{app:setup}.

We compare: (1) OA denotes Alg.~\ref{Alg:oa} using the deepest cut and primal recovery, (2) OA-M: additionally uses the proposed cut-generation and management strategy, (3) POA: uses the progressive-integrality framework with the deepest cut, and (4) POA-M: combines progressive integrality with the proposed cut-generation strategy. For the larger instances, we also compare against direct solution by Gurobi. All reported mean values are geometric means with a shift of one.

\subsection{Illustrative Instances}

We first use five small CBLIB instances, 50\_0\_N\_w for N $\in\{1,2,3,4,5\}$, to examine the computational mechanisms underlying the proposed methods. SCIP is used for LPs and MILPs and Mosek for SOCPs. Besides total runtime, Table~\ref{tab:small} reports the numbers of LP, partial-MILP, and full-MILP solves. MTime and MNodes denote, respectively, the average solution time and B\&B node count of the subsequent full MILPs after the progressive stages; detailed definitions and instance-wise results are given in Appendix~\ref{app:small}.

\begin{table}[!htbp] 
    \centering 
    \small
    \setlength{\belowcaptionskip}{0pt}
    \begin{tabular}{l|c|ccc|cc} 
        \toprule[1pt] 
        Method &Runtime &\# LP &\# IG &\# MIP &MTime &MNodes\\ 
        \midrule
        OA &262.1 &0 &0 &6.8 &110.3 &108.3\\
        OA-M &226.7 &0 &0 &6.6 &92.1 &23.1\\
        POA &111.5 &3.8 &1 &1.7 &48.0 &4.7\\
        POA-M &64.9 &3.8 &1 &1.2 &35.7 &2.4\\
        \bottomrule[1pt] 
    \end{tabular}
    \caption{Performance and MILP-solution characteristics on illustrative instances.}
    \label{tab:small}
\end{table}

\textbf{Effect of progressive integrality} Compared with OA, POA reduces the runtime by 57.5\% and reduces the average number of full-MILP solves from 6.8 to 1.7. The full MILPs solved after the progressive stages are also substantially easier: MTime decreases from 110.3 to 48.0, while MNodes decreases from 108.3 to 4.7. These results support the motivation in Section~\ref{sec:progress_int} that early LP and partial-integrality stages can strengthen the relaxation before the more expensive full-MILP phase.

\textbf{Effect of cut generation and management.}  Relative to their deepest-cut counterparts, OA-M and POA-M reduce runtime by 13.5\% and 41.8\%, respectively. They also require fewer full-MILP solves and fewer B\&B nodes. In particular, MNodes decreases from 108.3 to 23.1 for OA versus OA-M and from 4.7 to 2.4 for POA versus POA-M, providing computational evidence for the benefit of avoiding low-efficiency, nearly redundant cuts.

\subsection{Extended Instances from CBLIB}

We next consider 15 larger CBLIB instances, M\_0\_N\_w, with M $\in\{100,150,200\}$ and N $\in \{1,...,5\}$. For these instances, Gurobi is used to solve the LP and MILP subproblems and is also included as a direct MISOCP benchmark. Table~\ref{tab:medium} reports total runtime, the time required to reach selected optimality gaps, the time at which the best incumbent is obtained, and the final optimality gap. Instance-wise results are deferred to Appendix~\ref{app:medium}.

\begin{table}[!htbp] 
    \centering 
    \small
    \setlength{\belowcaptionskip}{0pt}
    \begin{tabular}{l|c|ccc|c} 
        \toprule[1pt] 
        Method &Runtime &1e-3 Gap &5e-4 Gap &Best &Final Gap\\ 
        \midrule
        Gurobi &263.4 &22.7 &52.5 &223.8 &9.4e-5\\
        OA &318.1 &56.6 &98.3 &249.9 &7.2e-5 \\
        OA-M &297.7 &50.4 &113.4 &248.3 &8.0e-5\\
        POA &207.4 &45.4 &47.6 &186.7 &7.8e-5 \\
        POA-M &193.4 &42.6 &46.5 &151.0 &7.6e-5 \\
        \bottomrule[1pt] 
    \end{tabular}
    \caption{Average performance on 15 extended CBLIB instances.}
    \label{tab:medium}
\end{table}

OA and OA-M are slower than direct Gurobi on this set, whereas progressive integrality substantially improves its performance. POA and POA-M reduce total runtime relative to Gurobi by 21.3\% and 26.6\%, respectively. POA-M also finds its best incumbent 19.1\% earlier than POA while attaining a comparable final gap. 

\subsection{Large-Scale Power-System Instances}

Finally, we evaluate the methods on substantially larger, structured MISOCPs arising from AC unit commitment with SOC power-flow relaxations. We consider 24-period models based on the Central Illinois 200-bus and South Carolina 500-bus systems\citep{birchfield2017}; the formulation is provided in Appendix~\ref{app:formulation}. Table~\ref{tab:large} reports total runtime to the target gap of 1e-3, time to intermediate gaps, time to the best incumbent, and the final gap.

\begin{table}[!htbp] 
    \centering 
    \small
    \setlength{\belowcaptionskip}{0pt}
    \begin{tabular}{c|l|c|ccc|c} 
        \toprule[1pt] 
        Data &Method &Runtime &1e-2 Gap &5e-3 Gap &Best &Final Gap\\ 
        \midrule
        \multirow{4}{*}{200-Bus} &Gurobi &2166 &2005 &2128 &2005 &6.8e-4\\
        &OA &1713 &1394 &1394 &1713 &9.1e-4\\
        &OA-M &1040 &776 &776 &879 &8.4e-4\\
        &POA &998 &776 &879 &998 &9.4e-4 \\
        &POA-M &655 &125 &125 &655 &9.9e-4 \\
        \midrule
        \multirow{4}{*}{500-Bus} &Gurobi &5003 &2669 &2669 &2669 &9.0e-4\\
        &OA &3634 &3106 &3106 &3634 &7.3e-4 \\
        &OA-M &2841 &2052 &2254 &2254 &8.3e-4 \\
        &POA &3060 &1436 &2145 &3060 &9.5e-4 \\
        &POA-M &1992 &1386 &1992 &1992 &9.8e-4 \\
        \bottomrule[1pt] 
    \end{tabular}
    \caption{Performance on the 200- and 500-bus AC unit-commitment instances.}
    \label{tab:large}
\end{table}

POA-M is the fastest method on both systems. POA-M reduces the runtime of POA by approximately 35\% on both instances, while also providing substantial improvements over classical OA and direct Gurobi. The proposed cut also makes OA-M significantly faster than OA and more comparable with POA. These results suggest that the computational gains from progressive integrality and the proposed cut management persist on larger structured MISOCPs.

\section{Conclusion}
We studied polyhedral outer approximation for MISOCPs from both geometric and computational perspectives. For second-order cones, we quantified the marginal contribution of a new supporting cut relative to the cuts already present in the relaxation and used this analysis to develop a depth-efficiency balanced cut-generation and management strategy. We further introduced a progressive-integrality OA framework that strengthens the polyhedral approximation through LP and partially integral stages before solving the full MILP, intending to reduce the difficulty of solving the subsequent MILPs. Computational experiments show that the two components provide complementary improvements over classical OA. These results suggest that effective OA should account not only for how strongly a new cut separates the current solution, but also for how much new information it contributes and when integrality is imposed during the approximation process.

\newpage

\bibliographystyle{abbrvnat}
\bibliography{references}

@ARTICLE{birchfield2017,
  author={Birchfield, Adam B. and Xu, Ti and Gegner, Kathleen M. and Shetye, Komal S. and Overbye, Thomas J.},
  journal={IEEE Transactions on Power Systems}, 
  title={Grid Structural Characteristics as Validation Criteria for Synthetic Networks}, 
  year={2017},
  volume={32},
  number={4},
  pages={3258-3265}}

@phdthesis{flores2022scheduling,
  author  = {Flores, Gonzalo E Constante},
  title   = {Scheduling of Power Units via Relaxation and Decomposition},
  school  = {The Ohio State University},
  year    = {2022}
}

@article{bezanson2017julia,
  title={Julia: A fresh approach to numerical computing},
  author={Bezanson, Jeff and Edelman, Alan and Karpinski, Stefan and Shah, Viral B},
  journal={SIAM review},
  volume={59},
  number={1},
  pages={65--98},
  year={2017},
  publisher={SIAM}
}

@article{jump,
  title={JuMP 1.0: recent improvements to a modeling language for mathematical optimization},
  author={Lubin, Miles and Dowson, Oscar and Garcia, Joaquim Dias and Huchette, Joey and Legat, Benoit and Vielma, Juan Pablo},
  journal={Mathematical Programming Computation},
  volume={15},
  pages={581--589},
  year={2023},
  publisher={Springer}
}

@misc{gurobi,
  author = {{Gurobi Optimization, LLC}},
  title = {{Gurobi Optimizer Reference Manual}},
  year = 2023,
  url = "https://www.gurobi.com"
}

@manual{mosek,
   author = "MOSEK ApS",
   title = "The MOSEK Python Fusion API manual. Version 11.0.",
   year = 2025,
   url = "https://docs.mosek.com/latest/pythonfusion/index.html"
 }

@article{constante2024security,
title = {Security-constrained unit commitment: A decomposition approach embodying Kron reduction},
journal = {European Journal of Operational Research},
volume = {319},
number = {2},
pages = {427-441},
year = {2024},
issn = {0377-2217},
author = {Gonzalo E. Constante-Flores and Antonio J. Conejo}
}

@article{bienstock2026accurate,
  title={Accurate linear cutting-plane relaxations for {ACOPF}},
  author={Bienstock, Daniel and Villagra, Matias},
  journal={Mathematical Programming Computation},
  volume={18},
  number={1},
  pages={79--133},
  year={2026},
  publisher={Springer}
}

@article{duran1986outer,
  title={An outer-approximation algorithm for a class of mixed-integer nonlinear programs},
  author={Duran, Marco A and Grossmann, Ignacio E},
  journal={Mathematical programming},
  volume={36},
  number={3},
  pages={307--339},
  year={1986},
  publisher={Springer}
}

@article{fletcher1994solving,
  title={Solving mixed integer nonlinear programs by outer approximation},
  author={Fletcher, Roger and Leyffer, Sven},
  journal={Mathematical programming},
  volume={66},
  number={1},
  pages={327--349},
  year={1994},
  publisher={Springer}
}

@article{quesada1992lp,
  title={An LP/NLP based branch and bound algorithm for convex MINLP optimization problems},
  author={Quesada, Ignacio and Grossmann, Ignacio E},
  journal={Computers \& chemical engineering},
  volume={16},
  number={10-11},
  pages={937--947},
  year={1992},
  publisher={Elsevier}
}

@article{dai2026scheduling,
  title={Scheduling Electricity Production Units to Mitigate Severe Weather Impact: An Efficient Computational Implementation},
  author={Dai, Yongzheng and Conejo, Antonio J and Qiu, Feng},
  journal={arXiv preprint arXiv:2604.03475},
  year={2026}
}

@unpublished{dai2026solving,
  author  = {Dai, Yongzheng and Conejo, Antonio J.},
  title   = {Conic Formulation and Solution for the Security-Constrained Unit Commitment Problem},
  note  = {accepted by EURO Journal on Computational Optimization},
  year = {2026}
}

@incollection{bonami2011algorithms,
  title={Algorithms and software for convex mixed integer nonlinear programs},
  author={Bonami, Pierre and Kilin{\c{c}}, Mustafa and Linderoth, Jeff},
  booktitle={Mixed integer nonlinear programming},
  pages={1--39},
  year={2011},
  publisher={Springer}
}

@article{bai2015decomposition,
  title={A decomposition method for network-constrained unit commitment with AC power flow constraints},
  author={Bai, Yang and Zhong, Haiwang and Xia, Qing and Kang, Chongqing and Xie, Le},
  journal={Energy},
  volume={88},
  pages={595--603},
  year={2015},
  publisher={Elsevier}
}

@article{liu2018global,
  title={Global solution strategies for the network-constrained unit commitment problem with {AC} transmission constraints},
  author={Liu, Jianfeng and Laird, Carl D and Scott, Joseph K and Watson, Jean-Paul and Castillo, Anya},
  journal={IEEE Transactions on Power Systems},
  volume={34},
  number={2},
  pages={1139--1150},
  year={2018},
  publisher={IEEE}
}

@ARTICLE{tuncer2023an,
  author={Tuncer, Deniz and Kocuk, Burak},
  journal={IEEE Transactions on Power Systems}, 
  title={An MISOCP-Based Decomposition Approach for the Unit Commitment Problem With AC Power Flows}, 
  year={2023},
  volume={38},
  number={4},
  pages={3388-3400}
}

@article{muckstadt1977application,
  title={An application of Lagrangian relaxation to scheduling in power-generation systems},
  author={Muckstadt, John A and Koenig, Sherri A},
  journal={Operations research},
  volume={25},
  number={3},
  pages={387--403},
  year={1977},
  publisher={INFORMS}
}

@article{zhuang2002towards,
  title={Towards a more rigorous and practical unit commitment by Lagrangian relaxation},
  author={Zhuang, Fulin and Galiana, Frank D},
  journal={IEEE Transactions on Power Systems},
  volume={3},
  number={2},
  pages={763--773},
  year={2002},
  publisher={IEEE}
}

@article{nasri2015network,
  title={Network-constrained AC unit commitment under uncertainty: A Benders’ decomposition approach},
  author={Nasri, Amin and Kazempour, S Jalal and Conejo, Antonio J and Ghandhari, Mehrdad},
  journal={IEEE transactions on power systems},
  volume={31},
  number={1},
  pages={412--422},
  year={2015},
  publisher={IEEE}
}

@article{castillo2016unit,
  title={The unit commitment problem with {AC} optimal power flow constraints},
  author={Castillo, Anya and Laird, Carl and Silva-Monroy, C{\'e}sar A and Watson, Jean-Paul and O’Neill, Richard P},
  journal={IEEE Transactions on Power Systems},
  volume={31},
  number={6},
  pages={4853--4866},
  year={2016},
  publisher={IEEE}
}

@article{kronqvist2016extended,
  title={The extended supporting hyperplane algorithm for convex mixed-integer nonlinear programming},
  author={Kronqvist, Jan and Lundell, Andreas and Westerlund, Tapio},
  journal={Journal of Global Optimization},
  volume={64},
  number={2},
  pages={249--272},
  year={2016},
  publisher={Springer}
}

@article{veinott1967supporting,
  title={The supporting hyperplane method for unimodal programming},
  author={Veinott Jr, Arthur F},
  journal={Operations Research},
  volume={15},
  number={1},
  pages={147--152},
  year={1967},
  publisher={INFORMS}
}

@incollection{land2009automatic,
  title={An automatic method for solving discrete programming problems},
  author={Land, Ailsa H and Doig, Alison G},
  booktitle={50 Years of Integer Programming 1958-2008: From the Early Years to the State-of-the-Art},
  pages={105--132},
  year={2009},
  publisher={Springer}
}

@article{achterberg2005branching,
  title={Branching rules revisited},
  author={Achterberg, Tobias and Koch, Thorsten and Martin, Alexander},
  journal={Operations Research Letters},
  volume={33},
  number={1},
  pages={42--54},
  year={2005},
  publisher={Elsevier}
}

@inproceedings{achterberg2009hybrid,
  title={Hybrid branching},
  author={Achterberg, Tobias and Berthold, Timo},
  booktitle={International Conference on Integration of Constraint Programming, Artificial Intelligence, and Operations Research},
  pages={309--311},
  year={2009},
  organization={Springer}
}

@InProceedings{scip1,
 Title = {Constraint Integer Programming: A New Approach to Integrate CP and MIP},
 Author = {Achterberg, Tobias and Berthold, Timo and Koch, Thorsten and Wolter, Kati},
 Booktitle = {Integration of AI and OR Techniques in Constraint Programming for Combinatorial Optimization Problems},
 Year = {2008},
 Address = {Berlin, Heidelberg},
 Editor = {Perron, Laurent and Trick, Michael A.},
 Pages = {6--20},
 Publisher = {Springer Berlin Heidelberg}
}

@article{scip2,
 Title = {SCIP: solving constraint integer programs},
 Author = {Achterberg, Tobias},
 Journal = {Mathematical Programming Computation},
 Year = {2009},
 Month = {Jul},
 Number = {1},
 Pages = {1--41},
 Volume = {1},
 Day = {01}
}

@article{friberg2016cblib,
  title={CBLIB 2014: a benchmark library for conic mixed-integer and continuous optimization},
  author={Friberg, Henrik A},
  journal={Mathematical Programming Computation},
  volume={8},
  number={2},
  pages={191--214},
  year={2016},
  publisher={Springer}
}

@article{sahinidis1996baron,
  title={BARON: A general purpose global optimization software package},
  author={Sahinidis, Nikolaos V},
  journal={Journal of global optimization},
  volume={8},
  number={2},
  pages={201--205},
  year={1996},
  publisher={Springer}
}

@article{muts2020decomposition,
  title={The decomposition-based outer approximation algorithm for convex mixed-integer nonlinear programming},
  author={Muts, Pavlo and Nowak, Ivo and Hendrix, Eligius MT},
  journal={Journal of Global Optimization},
  volume={77},
  number={1},
  pages={75--96},
  year={2020},
  publisher={Springer}
}

@incollection{benson2013mixed,
  title={Mixed-integer second-order cone programming: A survey},
  author={Benson, Hande Y and Sa{\u{g}}lam, {\"U}mit},
  booktitle={Theory driven by influential applications},
  pages={13--36},
  year={2014},
  publisher={Informs}
}

@TECHREPORT{SAK:assortment,
  AUTHOR       = {A. \c{S}en and A. Atamt{\"u}rk and P. Kaminsky},
  TITLE        = {A Conic Integer Programming Approach to Constrained 
                  Assortment Optimization under the Mixed Multinomial 
                  Logit Model},
  YEAR         = {2015},
  INSTITUTION  = {IEOR, University of California--Berkeley},
  TYPE         = {Research Report},
  NUMBER = {BCOL.15.06},
  MONTH = {October},
}

@article{ghaoui2003worst,
  title={Worst-case value-at-risk and robust portfolio optimization: A conic programming approach},
  author={Ghaoui, Laurent El and Oks, Maksim and Oustry, Francois},
  journal={Operations research},
  volume={51},
  number={4},
  pages={543--556},
  year={2003},
  publisher={INFORMS}
}

@article{byeon2019unit,
  title={Unit commitment with gas network awareness},
  author={Byeon, Geunyeong and Van Hentenryck, Pascal},
  journal={IEEE Transactions on Power Systems},
  volume={35},
  number={2},
  pages={1327--1339},
  year={2019},
  publisher={IEEE}
}

@article{kucukyavuz2023consistent,
  title={Consistent second-order conic integer programming for learning Bayesian networks},
  author={Kucukyavuz, Simge and Shojaie, Ali and Manzour, Hasan and Wei, Linchuan and Wu, Hao-Hsiang},
  journal={Journal of Machine Learning Research},
  volume={24},
  number={322},
  pages={1--38},
  year={2023}
}

@article{ben2001polyhedral,
  title={On polyhedral approximations of the second-order cone},
  author={Ben-Tal, Aharon and Nemirovski, Arkadi},
  journal={Mathematics of Operations Research},
  volume={26},
  number={2},
  pages={193--205},
  year={2001},
  publisher={INFORMS}
}

@article{vielma2008lifted,
  title={A lifted linear programming branch-and-bound algorithm for mixed-integer conic quadratic programs},
  author={Vielma, Juan Pablo and Ahmed, Shabbir and Nemhauser, George L},
  journal={INFORMS Journal on Computing},
  volume={20},
  number={3},
  pages={438--450},
  year={2008},
  publisher={INFORMS}
}

@article{atamturk2010conic,
  title={Conic mixed-integer rounding cuts},
  author={Atamt{\"u}rk, Alper and Narayanan, Vishnu},
  journal={Mathematical programming},
  volume={122},
  number={1},
  pages={1--20},
  year={2010},
  publisher={Springer}
}

@article{lubin2018polyhedral,
  title={Polyhedral approximation in mixed-integer convex optimization},
  author={Lubin, Miles and Yamangil, Emre and Bent, Russell and Vielma, Juan Pablo},
  journal={Mathematical Programming},
  volume={172},
  number={1},
  pages={139--168},
  year={2018},
  publisher={Springer}
}

@article{coey2020outer,
  title={Outer approximation with conic certificates for mixed-integer convex problems},
  author={Coey, Chris and Lubin, Miles and Vielma, Juan Pablo},
  journal={Mathematical Programming Computation},
  volume={12},
  number={2},
  pages={249--293},
  year={2020},
  publisher={Springer}
}

@article{kelley1960cutting,
  title={The cutting-plane method for solving convex programs},
  author={Kelley, Jr, James E},
  journal={Journal of the society for Industrial and Applied Mathematics},
  volume={8},
  number={4},
  pages={703--712},
  year={1960},
  publisher={SIAM}
}

@article{serrano2020relation,
  title={On the relation between the extended supporting hyperplane algorithm and Kelley’s cutting plane algorithm: F. Serrano et al.},
  author={Serrano, Felipe and Schwarz, Robert and Gleixner, Ambros},
  journal={Journal of Global Optimization},
  volume={78},
  number={1},
  pages={161--179},
  year={2020},
  publisher={Springer}
}

@article{abhishek2010filmint,
  title={FilMINT: An outer approximation-based solver for convex mixed-integer nonlinear programs},
  author={Abhishek, Kumar and Leyffer, Sven and Linderoth, Jeff},
  journal={INFORMS Journal on computing},
  volume={22},
  number={4},
  pages={555--567},
  year={2010},
  publisher={INFORMS}
}

@article{turner2023adaptive,
  title={Adaptive cut selection in mixed-integer linear programming},
  author={Turner, Mark and Koch, Thorsten and Serrano, Felipe and Winkler, Michael},
  journal={Open Journal of Mathematical Optimization},
  volume={4},
  pages={1--28},
  year={2023}
}

@article{warme2026quantitativeII,
  title={Quantitative indicators for strength of inequalities with respect to a polyhedron, Part II: Applications and computational evidence},
  author={Warme, David M},
  journal={Mathematical Programming},
  pages={1--35},
  year={2026},
  publisher={Springer}
}

@article{warme2026quantitativeI,
  title={Quantitative Indicators for Strength of Inequalities with Respect to a Polyhedron},
  author={Warme, David M},
  journal={Mathematical Programming},
  pages={1--39},
  year={2026},
  publisher={Springer}
}

\newpage
\appendix

\section{Proofs}
\subsection{Proofs of Proposition~\ref{prop:deepest_cut}}\label{app:prop:deepest_cut}
\begin{proof}
    From \citep[Proposition 5]{bienstock2026accurate}, the closed form of the deepest cut is $(x^{(1)})^\top x\leq \|x^{(1)}\|s$ after scaling by $\frac{1}{\sqrt{2}}$, and the projection from $(x^{(1)}, s^{(1)})$ onto $L^N$ is
    \begin{equation}\label{eq:projection_point}
        (y^{(1)}, t^{(1)}) := (\frac{t^{(1)}x^{(1)}}{\|x^{(1)}\|_2}, \frac{\|x^{(1)}\|_2 + s^{(1)}}{2}).
    \end{equation}
    From $(y^{(1)}, t^{(1)}) \in \partial(L^N)$, there is $\|y^{(1)}\|_2 = t^{(1)}$. Plugging it into $y^{(1)} = \frac{t^{(1)}x^{(1)}}{\|x^{(1)}\|_2}$, we have \[\frac{y^{(1)}}{\|y^{(1)}\|_2} = \frac{x^{(1)}}{\|x^{(1)}\|_2};\]
    thus the deepest cut is equivalent to $(y^{(1)})^\top x\leq \|y^{(1)}\|s$. 
    
    Plugging \eqref{eq:projection_point} into the cut, we find that
    \[(y^{(1)})^\top y^{(1)} = \|y^{(1)}\|t^{(1)};\]
    thus the deepest cut is a supporting cut for $L^N$.

    Additionally, let $g(x, s) := \|x\|_2 - s$. Then
    \[\begin{aligned}
        &g(y^{(1)}, t^{(1)}) + \nabla_x g(y^{(1)}, t^{(1)})^\top (x - y^{(1)}) + \nabla_s g(y^{(1)}, t^{(1)})^\top (s - t^{(1)}) \leq 0\\
        \iff &\|y^{(1)}\|_2 - t^{(1)} + (\frac{y^{(1)}}{\|y^{(1)}\|_2})^\top(x-y^{(1)}) -(s - t^{(1)}) \leq 0\\
        \iff &(y^{(1)})^\top x - t^{(1)}s \leq 0.
    \end{aligned}\]
    Combining $y^{(1)} = \frac{t^{(1)}x^{(1)}}{\|x^{(1)}\|_2}$ and $\frac{y^{(1)}}{\|y^{(1)}\|_2} = \frac{x^{(1)}}{\|x^{(1)}\|_2}$, we have $t^{(1)} = \|y^{(1)}\|_2$. Therefore, the deepest cut is a gradient cut based on $(y^{(1)}, t^{(1)})$. \qed
\end{proof}

\subsection{Proofs of Theorem~\ref{thm:dist_bound}}\label{app:thm:dist_bound}
\begin{proof}
    For $(x^{(2)}, s^{(2)}) \in L^N$, Equations~\eqref{eq:euclid_bound} and \eqref{eq:angle_bound} hold since the RHS is nonnegative. We discuss $(x^{(2)}, s^{(2)}) \notin L^N$ in the proof.
    
    From $(x^{(2)}, s^{(2)}) \in P$, there is $(y^{(1)}/\|y^{(1)}\|_2)^\top x^{(2)} \leq s^{(2)}$. Let $e = y^{(1)}/\|y^{(1)}\|_2$, in which $e^\top e = 1$. We do an orthogonal decomposition: 
    \[x^{(2)}:= (e^\top x^{(2)})e + (x^{(2)}-(e^\top x^{(2)})e),\]
    where $e^\top (x^{(2)}-(e^\top x^{(2)})e) = 0$. We split the proof into 2 cases.

    \textbf{First Case: $e^\top x^{(2)} \geq 0$.} Then we have $|e^\top x^{(2)}| = e^\top x^{(2)} \leq s^{(2)}$, and it follows
    \begin{equation}\label{eq:x_2_norm}
        \|x^{(2)}\|_2^2 = (e^\top x^{(2)})^2 + \|x^{(2)}-(e^\top x^{(2)})e\|_2^2 \leq (s^{(2)})^2 + \|x^{(2)}-(e^\top x^{(2)})e\|_2^2.
    \end{equation}
    From \citep[Proposition 5]{bienstock2026accurate}, the Euclidean distance between $(x^{(2)}, s^{(2)})\not\in L^N$ and $L^N$ satisfies
    \[\mathrm{dist}((x^{(2)}, s^{(2)}), L^N)^2 = \frac{(\|x^{(2)}\|_2 - s^{(2)})^2}{2}.\]
    Plugging Equation~\eqref{eq:x_2_norm} into the right-hand-side,
    \begin{equation}\label{eq:dist_bound}
        \begin{aligned}
            \mathrm{dist}((x^{(2)}, s^{(2)}), L^N)^2 
            \leq &\frac{(\sqrt{(s^{(2)})^2 + \|x^{(2)}-(e^\top x^{(2)})e\|_2} - s^{(2)})^2}{2}\\
            \leq &\frac{\|x^{(2)}-(e^\top x^{(2)})e\|_2^2}{2}
        \end{aligned}
    \end{equation}
    On the other hand, $y^{(1)} = \|y^{(1)}\|_2e$. Therefore,
    \[x^{(2)} - y^{(1)} = (e^\top x^{(2)} - \|y^{(1)}\|_2)e + (x^{(2)}-(e^\top x^{(2)})e);\]
    thus,
    \[\|x^{(2)} - y^{(1)}\|_2^2 \geq \|x^{(2)}-(e^\top x^{(2)})e\|_2^2.\]
    Plugging Equation~\eqref{eq:dist_bound} into the inequality, we have
    \[\mathrm{dist}((x^{(2)}, s^{(2)}), L^N)^2 \leq \frac{\|x^{(2)}-(e^\top x^{(2)})e\|_2^2}{2} \leq \frac{\|x^{(2)} - y^{(1)}\|_2^2}{2}.\]

    \textbf{Second Case: $e^\top x^{(2)} < 0$.} We have
    \begin{equation*}
        \begin{aligned}
            \|x^{(2)} - y^{(1)}\|_2^2 &= \|(e^\top x^{(2)} - \|y^{(1)}\|_2)e + (x^{(2)}-(e^\top x^{(2)})e\|_2\\
            &= (e^\top x^{(2)} - \|y^{(1)}\|_2)^2 + \|x^{(2)}-(e^\top x^{(2)})e\|_2^2.
        \end{aligned}
    \end{equation*}
    Therefore,
    \begin{equation*}
        \begin{aligned}
            \|x^{(2)} - y^{(1)}\|_2^2 - \|x^{(2)}\|_2^2 &= (e^\top x^{(2)} - \|y^{(1)}\|_2)^2 - (e^\top x^{(2)})^2\\
            &=\|y^{(1)}\|_2^2 - 2(e^\top x^{(2)})\|y^{(1)}\|_2.
        \end{aligned}
    \end{equation*}
    Since $e^\top x^{(2)} < 0$, it follows $\|x^{(2)} - y^{(1)}\|_2^2 - \|x^{(2)}\|_2^2 > 0$. Consider
    \[\mathrm{dist}((x^{(2)}, s^{(2)}), L^N)^2 = \frac{(\|x^{(2)}\|_2 - s^{(2)})^2}{2} \leq \frac{\|x^{(2)}\|_2^2}{2},\]
    we have
    \[\mathrm{dist}((x^{(2)}, s^{(2)}), L^N)^2 \leq \frac{\|x^{(2)}\|_2^2}{2} \leq \frac{\|x^{(2)} - y^{(1)}\|_2^2}{2}.\]

    Therefore, we prove Equation~\eqref{eq:euclid_bound} for the two cases.
    
    Furthermore, the angle $\theta$ between $y^{(1)}$ and $x^{(2)}$ such that
    \[\cos(\theta) = \frac{(y^{(1)})^\top x^{(2)}}{\|y^{(1)}\|_2\|x^{(2)}\|_2}.\]
    Since $(x^{(2)}, s^{(2)}) \in P$, $(y^{(1)}/\|y^{(1)}\|_2)^\top x^{(2)} \leq s^{(2)}$. Therefore, $\cos(\theta) \leq \frac{s^{(2)}}{\|x^{(2)}\|_2}$.
    Then the violation
    \[\|x^{(2)}\|_2 - s^{(2)} \leq \|x^{(2)}\|_2 - \cos(\theta)\|x^{(2)}\|_2 =\left(1 - \cos(\theta)\right)\|x^{(2)}\|_2.\]
    Plugging $\mathrm{dist}((x^{(2)}, s^{(2)}), L^N) = \frac{\sqrt{2}(\|x^{(2)}\|_2 - s^{(2)})}{2}$ into the inequality, we have
    \[\mathrm{dist}((x^{(2)}, s^{(2)}), L^N) \leq \frac{\sqrt{2}(1-\cos(\theta))\|x^{(2)}\|_2}{2}.\]
    \qed
\end{proof}

\subsection{Proofs of Theorem~\ref{thm:max_vio_reduction}}\label{app:thm:max_vio_reduction}
\begin{proof}
    We split the proof of Equation~\eqref{eq:max_vio_reduction} into two cases.
    
    \textbf{First case:}  $\|\bar x - y^{(m+1)}\|_2 < \Delta(\bar x, \bar s, P)$. Then 
    \[\Delta(\bar x, \bar s, \bar P) := \|\bar x - y^{(m+1)}\|_2\] 
    Let $y^{(j)} := \arg\min_{i=1,...,m}\{\|y^{(i)} - y^{(m+1)}\|_2\}$, and by the reverse triangle inequality, we have 
    \[\Delta(\bar x, \bar s, \bar P) = \|y^{(m+1)} - \bar x\|_2  \geq \|y^{(j)} - \bar x\|_2 -  \|y^{(j)} - y^{(m+1)}\|_2.\]
    By the definition of $\Delta(\bar x, \bar s, P)$, $\|y^{(j)} - \bar x\|_2 \geq \Delta(\bar x, \bar s, P)$; thus
    \[\Delta(\bar x, \bar s, \bar P) \geq \Delta(\bar x, \bar s, P) - \|y^{(j)} - y^{(m+1)}\|_2.\]
    \[\implies \Delta(\bar x, \bar s, P) - \Delta(\bar x, \bar s, \bar P) \leq \|y^{(j)} - y^{(m+1)}\|_2 = \min_{i=1,...,m}\{\|y^{(i)} - y^{(m+1)}\|_2\}.\]

    \textbf{Second case:} $\|\bar x - y^{(m+1)}\| \geq  \Delta(\bar x, \bar s, P)$, then $\Delta(\bar x, \bar s, \bar P) = \Delta(\bar x, \bar s, P)$. As a result, 
    \[\Delta(\bar x, \bar s, P) - \Delta(\bar x, \bar s, \bar P) = 0 \leq \min_{i=1,...,m}\{\|y^{(i)} - y^{(m+1)}\|_2\}.\] 

    Similarly, let $\theta_i\in[0,\pi]$ be the angle between $\bar x$ and $y^{(i)}$ for $i = 1,...,m+1$. We can derive
    \begin{equation}\label{eq:angle_max_vio_reduction}
        \Pi(\bar x, \bar s, P) - \Pi(\bar x, \bar s, \bar P) \leq \|\bar x\|_2\min_{i=1,...,m}\{|\cos(\theta_i) - \cos(\theta_{m+1})|\}.
    \end{equation}
    By the definition of $\sigma_i$, the spherical triangle inequality gives
    \[|\theta_i - \theta_{m+1}| \leq \sigma_i,\]
    and the function $\sin(\cdot)$ is increasing in $[0, \pi/2]$. Therefore, we have
    \begin{equation}\label{eq:angle_diff}
        \begin{aligned}
            |\cos(\theta_i) - \cos(\theta_{m+1})| = &|2\sin(\frac{\theta_i + \theta_{m+1}}{2})\sin(\frac{\theta_i - \theta_{m+1}}{2})|\\
            \leq &|2\sin(\frac{\theta_i + \theta_{m+1}}{2})\sin(\frac{\sigma_i}{2})|\\
            \leq & 2\sin(\frac{\sigma_i}{2}).
        \end{aligned}
    \end{equation}
    Plugging Equation~\eqref{eq:angle_diff} into Equation~\eqref{eq:angle_max_vio_reduction}, we get
    \[\Pi(\bar x, \bar s, P) - \Pi(\bar x, \bar s, \bar P) \leq 2\|\bar x\|_2\min_{i=1,...,m}\sin(\frac{\sigma_i}{2}).\]
    \qed
\end{proof}

\subsection{Proofs of Theorem~\ref{thm:vol_reduction}}\label{app:thm:vol_reduction}

\begin{proof}
    We begin with a slice of the space where $s = r \leq U_r$. Then the original relaxation is
    \[P_m(r) := \{x\in \mathbb R^{N-1}: |x_i| \leq U, u_i^\top x \leq r, \forall i=1,...,m\},\]
    where $u_i = \frac{y^{(i)}}{\|y^{(i)}\|} \in \mathbb S^{N-2}$, and $\mathbb S^{N-2}$ is the unit sphere in $\mathbb R^{N-1}$. And
    \[P_{m+1}(r) := P_m(r) \cap \{x\in \mathbb R^{N-1}: u_{m+1}^\top x \leq r\},\]
    i.e., the slice of the strengthened relaxation. Let $D(r) := P_{m}(r)\backslash P_{m+1}(r)$ be the region removed by the new cut. We aim to derive the upper bound for $\mathrm{Vol}(D(r))$.
    
    Let $\bar u\in\{u_1, ..., u_m\}$ be the unit vector whose angle to $u_{m+1}$ is $\bar \theta$. 

    For every $x\in D(r)$, there is
    \[\bar u^\top x \leq r\ \land\ u_{m+1}^\top x > r.\]
    \[\implies r < u_{m+1}^\top x = \bar u^\top x + (u_{m+1} - \bar u)^\top x.\]
    Rearranging gives
    \[0\leq r - \bar u^\top x < (u_{m+1} - \bar u)^\top x.\]
    By the Cauchy–Schwarz inequality,
    \[r - \bar u^\top x < \|u_{m+1} - \bar u\|_2 \|x\|_2.\]
    Let 
    \[R(r) := \max\{\|x\|_2: |x_i| \leq U, u_i^\top x \leq r, \forall i=1,...,m\}\]
    be the upper bound of $\|x\|_2$. Since $x_i$ are bounded, $R(r)<+\infty$. Since $\bar u$ and $u_{m+1}$ are unit vectors separated by angle $\bar\theta$,
    \[\|u_{m+1} - \bar u\|_2 = 2\sin(\frac{\bar\theta}{2}).\]
    Therefore,
    \[r - \bar u^\top x < \|u_{m+1} - \bar u\|_2 \|x\|_2 \leq 2R(r)\sin(\frac{\bar\theta}{2}).\]
    It follows that every $x\in D(r)$ satisfies
    \[r - 2R(r)\sin(\frac{\bar\theta}{2})< \bar u^\top x \leq r.\]
    Thus,
    \[D(r)\subseteq P_m(r) \cap \{x: r - 2R(r)\sin(\frac{\bar\theta}{2})< \bar u^\top x \leq r\}.\]
    The removed region is therefore contained in a slab adjacent to the existing hyperplane $\bar u^\top x = r$, whose thickness is $2R(r)\sin(\frac{\bar\theta}{2})$. Let
    \[\bar A(r) := \sup_{t\in\mathbb R}\mathrm{Vol}(P_m(r)\cap\{x:\bar u^\top x = t\}),\]
    the upper bound of the $(N-2)$-dimensional volume of every cross-section of $P_m(r)$ perpendicular to $\bar u$. Since $x$ is bounded, $\bar A(r) <\infty$. As a result,
    \[\mathrm{Vol}(D(r)) \leq 2R(r)\bar A(r)\sin\frac{\bar\theta}{2}.\]
    
    Finally,
    \begin{equation*}
        \begin{aligned}
            \mathrm{Vol}(P) - \mathrm{Vol}(\bar P) &= \int_0^{U_r} \mathrm{Vol}(D(r))\, dr\\ 
            &\leq \int_0^{U_r} 2R(r)\bar A(r)\sin\frac{\bar\theta}{2}\, dr\\
            &= \left(\int_0^{U_r} R(r)\bar A(r)\, dr\right)\cdot 2\sin\frac{\bar\theta}{2}\\
    (2\sin\frac{\bar\theta}{2}\leq \bar\theta)\quad        &\leq \left(\int_0^{U_r} 2R(r)\bar A(r)\, dr\right)\bar\theta.
        \end{aligned}
    \end{equation*}
    Since both $R(r)$ and $\bar{A}(r)$ can be uniformly bounded over $r\in [0, U_r]$, the first term 
    \[\int_0^{U_r} 2R(r)\bar A(r)\, dr <\infty;\] 
    therefore, $\mathrm{Vol}(P) - \mathrm{Vol}(\bar P) = O(\bar\theta)$.
    \qed
\end{proof}

\subsection{Proofs of Lemma~\ref{le:support_cut}}\label{app:le:support_cut}

\begin{proof}
    Let $g(x, s) = \|x\|_2 - s$. If $x\neq 0$, then
    \[\nabla g(x, s) = (\frac{x}{\|x\|_2}, -1).\]
    Then the supporting inequality at $(\bar y, \bar t)$ is
    \[g(\bar y, \bar t) + \nabla_x g(\bar y, \bar t)(x - \bar y) +\nabla_s g(\bar y, \bar t) (s - \bar t)\leq 0\]
    \[\implies g(\bar y, \bar t) + \frac{\bar y}{\|\bar y\|_2}(x - \bar y) - (s - \bar t)\leq 0.\]
    Since $g(\bar y, \bar t) = \|\bar y\|_2 - \bar t = 0$, this becomes
    \[\frac{\bar y^\top x}{\|\bar y\|_2} - s\leq 0 \iff \bar y^\top x \leq \|\bar y\|_2 \cdot s.\]
    \qed
\end{proof}

\subsection{Proofs of Lemma~\ref{le:deepest_opt}}\label{app:le:deepest_opt}
\begin{proof}
    If $\frac{\bar{x}^\top y^{(i)}}{\|\bar x\|_2} \leq \delta$ for all $i=1,...,m$, then $\bar y = \frac{\bar{x}}{\|\bar x\|_2}$ is feasible. Since $\frac{\bar{x}}{\|\bar x\|_2}$ defines the deepest cut, it is the optimal solution. \qed
\end{proof}

\subsection{Proofs of Proposition~\ref{prop:general_opt_solution}}\label{app:prop:general_opt_solution}

\begin{proof}
    As $S$ is the active set to an optimal solution, for any candidate $\bar y$ of the optimal solution, we have
    \[{y^{(i)}}^\top \bar y = \delta,\ \forall i\in S.\]
    Let $A_S = [y^{(i)}]_{i\in S}$ and $\delta_S = [\delta, ..., \delta]\in\mathbb R^{|S|}$. Since $y^{(i)}$ is linearly independent of each other, the matrix $A_s$ has full column rank. We have
    \[A_s^\top \bar y = \delta_S.\]
    Let the vector $p_S = A_S(A_S^\top A_s)^{-1}\delta_S$, then
    \[\bar y = p_S + z,\]
    where $z \in \mbox{Null}(A_S^\top)$. From $\|\bar y\|_2 = 1$ and $p_S \perp z$, there is $\|p_S\|_2 + \|z\|_2 = 1$ and $\|p_S\|_2 \leq 1$. The objective becomes
    \[\bar x^\top \bar y -\bar s = \bar x^\top p_S + \bar x^\top z - \bar s.\]
    To maximize the objective, $z = t\cdot P_S\bar x$, where the matrix $P_S := I - A_S (A_S^\top A_S)^{-1} A_S^\top$ is the projection matrix to $\mbox{Null}(A_S^\top)$. Hence, provided $P_S \bar x \neq 0$, i.e, $\bar x \not\in \mbox{Span}(A_S)$, the solution
    \[y^* = p_S + \sqrt{1-\|p_S\|_2^2}\frac{P_S\bar x}{\|P_S\bar x\|_2}\]
    is global optimal if it satisfies \eqref{eq:cos_diff}.
    
    If $P_S \bar x = 0$, then any solution such that $y = p_S + z$ makes $\|y\|_2 = 1$ and satisfying \eqref{eq:cos_diff} is optimal. \qed
\end{proof}

\subsection{Proof of Theorem~\ref{thm:new_cut_rotating}}\label{app:thm:new_cut_rotating}

\begin{proof}
    We consider a restricted problem to \eqref{eq:new_cut}:
    \begin{equation*}
        \begin{aligned}
            \max_y\ &\bar{x}^\top y - \bar s \\
            \mbox{s.t.}\ &\|y\|_2 = 1,\\
            &y^\top y^{(i^*)} \leq \delta.
        \end{aligned}
    \end{equation*}
    Since $\frac{(\bar x)^\top y^{(i^*)}}{\|\bar x\|_2} > \delta$, the unconstrained optimizer $\frac{\bar x}{\|\bar x\|_2}$ violates the only angular constraint. Hence, this constraint is active at an optimal solution of the one-constraint relaxation. Therefore, from Proposition~\ref{prop:general_opt_solution}, we have a candidate of the optimal solution
    \[y^* = \delta y^{(i^*)} + \sqrt{1-\delta^2}\frac{(I - y^{(i^*)}{y^{(i^*)}}^\top) x}{\|(I - y^{(i^*)}{y^{(i^*)}}^\top)\bar x\|_2},\]
    and $\angle (y^*, y^{(i^*)}) = \arccos(\delta)$.

    Then we verify the feasibility of $y^*$ for other constraints in Problem~\eqref{eq:new_cut}. From the definition of $\bar \delta$, for any $i\neq i^*$, we have
    \[{y^{(i)}}^\top y^{(i^*)} \leq 2\delta^2 - 1 = \cos(2\arccos(\delta)),\]
    \[\implies \angle(y^{(i)}, y^{(i^*)}) \geq 2\arccos(\delta).\]
    By the reverse triangle inequality for angles on the unit sphere,
    \[\angle (y^*, y^{(i)}) \geq \angle (y^{(i^*)} , y^{(i)}) - \angle (y^*, y^{(i^*)}) \geq \arccos(\delta);\]
    thus, ${y^*}^\top y^{(j)} = \cos(\angle (y^*, y^{(j)})) \leq \delta$. Therefore, $y^*$ is feasible in Problem~(\ref{eq:new_cut}), and hence it is optimal.
\end{proof}

\section{Tightness of Theorem~\ref{thm:vol_reduction}}\label{app:extend_vol_reduction}

Theorem~\ref{thm:vol_reduction} shows that the volume removed by a new supporting
cut is $O(\bar\theta)$, where $\bar\theta$ is the minimum angular distance
between the new supporting direction and the existing supporting directions.
We next show that this bound is tight under a local nondegeneracy condition.

Consider a family of new unit supporting directions $u_{m+1}$
approaching an existing unit direction $\bar u\in\{u_1,\ldots,u_m\}$.
For sufficiently small $\bar\theta>0$, write
\[u_{m+1}= \cos(\bar\theta)\bar u + \sin(\bar\theta)w, \]
where $\|w\|_2=1$ and $w^\top \bar u=0$.

For a fixed slice $s=r$, define
\[P_m(r):=\left\{x\in\mathbb R^{N-1}:|x_j|\le U,\ u_i^\top x\le r,\quad i=1,\ldots,m\right\},\]
and
\[F(r):=P_m(r)\cap\{x:\bar u^\top x=r\}.\]
We impose the following local nondegeneracy condition. Suppose there exists an
interval $I\subset(0,U_r)$ with positive length and constants
$a,\eta,\rho>0$ such that, for every $r\in I$, there exists a measurable set
\[K(r)\subseteq \operatorname{ri}(F(r))\]
satisfying
\[\mathcal H^{N-2}(K(r))\ge a,\]
\[w^\top y\ge \eta,\quad\forall y\in K(r),\]
and
\[y-p\bar u\in P_m(r),\quad \forall y\in K(r),\  0\le p\le \rho.\]
Here $\mathcal H^{N-2}$ denotes the $(N-2)$-dimensional Hausdorff measure.
Geometrically, this condition requires that a uniformly positive portion of
the facet $F(r)$ is exposed in the direction $w$, and that the relaxation has
a uniformly positive inward neighborhood behind that portion of the facet.

Under this condition, we have
\begin{equation}\label{eq:vol_reduction_linear}
    \operatorname{Vol}(P)-\operatorname{Vol}(\bar P)=\Theta(\bar\theta), \quad \bar\theta\to 0.
\end{equation}

\begin{proof}
Theorem~\ref{thm:vol_reduction} already gives
\[\operatorname{Vol}(P)-\operatorname{Vol}(\bar P)=O(\bar\theta).\]
It remains to prove the matching lower bound.

Fix $r\in I$ and $y\in K(r)$. Consider a point immediately inside the existing
facet,
\[x=y-p\bar u,\quad p\ge0.\]
Since $y\in F(r)$, we have
\[\bar u^\top y=r.\]
The violation of the new cut at $x$ is
\[u_{m+1}^\top x-r.\]
Using
\[u_{m+1}=\cos(\bar\theta)\bar u+\sin(\bar\theta)w\]
and $w^\top \bar u=0$, we obtain
\[\begin{aligned}
u_{m+1}^\top x-r
&=\left(\cos(\bar\theta)\bar u+\sin(\bar\theta)w\right)^\top (y-p\bar u)-r  \\
&= r(\cos(\bar\theta)-1) + \sin(\bar\theta)w^\top y - p\cos(\bar\theta).
\end{aligned}\]
Since $w^\top y\ge \eta$, and since $r\le U_r$, for sufficiently small
$\bar\theta>0$ we have
\[\sin(\bar\theta)\ge \frac{\bar\theta}{2},\quad 1-\cos(\bar\theta)\le \frac{\bar\theta^2}{2},\quad \cos(\bar\theta)\le 1.\]
Therefore,
\[u_{m+1}^\top x-r\ge -\frac{U_r}{2}\bar\theta^2+ \frac{\eta}{2}\bar\theta-p.\]
Now choose
\[0\le p\le \frac{\eta}{4}\bar\theta.\]
Then
\[u_{m+1}^\top x-r\ge\frac{\eta}{4}\bar\theta-\frac{U_r}{2}\bar\theta^2.\]
Thus, for all sufficiently small $\bar\theta>0$, $u_{m+1}^\top x>r$. Hence $x$ violates the new supporting cut.

At the same time, by the local nondegeneracy condition, if $\bar\theta$ is
sufficiently small so that $\frac{\eta}{4}\bar\theta\le \rho$,
then $y-p\bar u\in P_m(r)$ for every $y\in K(r)$ and every $0\le p\le \frac{\eta}{4}\bar\theta$. Consequently, the set
\[E_{\bar\theta}(r):=\left\{y-p\bar u:y\in K(r),\;0\le p\le \frac{\eta}{4}\bar\theta\right\}\]
is contained in the part of $P_m(r)$ removed by the new cut.

Because $\bar u$ is normal to the facet $F(r)$, the thickness of the set $E_{\bar\theta}(r)$ is $\eta\bar\theta/4$ over $K(r)$. Therefore,
\[\begin{aligned}
\operatorname{Vol}\left(P_m(r)\setminus P_{m+1}(r)\right) &\ge \operatorname{Vol}\left(E_{\bar\theta}(r)\right)  \\
&=\frac{\eta}{4}\bar\theta\cdot \mathcal H^{N-2}(K(r))  \\
&\ge \frac{a\eta}{4}\bar\theta.
\end{aligned}\]
Integrating over $r\in I$ gives
\[\begin{aligned}
\operatorname{Vol}(P)-\operatorname{Vol}(\bar P) &= \int_0^{U_r}\operatorname{Vol}_{N-1}\left(P_m(r)\setminus P_{m+1}(r)\right)\,dr  \\
&\ge\int_I\frac{a\eta}{4}\bar\theta\,dr  \\
&=\frac{a\eta |I|}{4}\bar\theta.
\end{aligned}\]
Thus,
\[\operatorname{Vol}(P)-\operatorname{Vol}(\bar P)=\Omega(\bar\theta).\]
Combining this lower bound with the upper bound
\[\operatorname{Vol}(P)-\operatorname{Vol}(\bar P)=O(\bar\theta)\]
from Theorem~\ref{thm:vol_reduction}, we obtain
\[\operatorname{Vol}(P)-\operatorname{Vol}(\bar P)=\Theta(\bar\theta).\]
\end{proof}

\section{Parallel-Cut Management and Residual Feasibility} \label{app:too_parallel}

Outer-approximation implementations commonly reject nearly parallel cuts to limit redundancy and numerical ill-conditioning \citep{bienstock2026accurate,dai2026scheduling,dai2026solving}. More specifically, for two cuts $c_1^\top x\leq 0$ and $c_2^\top x\leq 0$, if the cosine of the angle formed by $c_1/\|c_1\|_2$ and $c_2/\|c_2\|_2$ is greater than $1-\epsilon$, then the method does not add the two cuts into the same model.

For two SOC supporting cuts $(y^{(i)})^\top x \leq \|y^{(i)}\|_2\cdot s$ and $(y^{(j)})^\top x \leq \|y^{(j)}\|_2\cdot s$, their coefficient vectors in the $(x,s)$-space are $(y^{(i)}/\|y^{(i)}\|_2, -1)$ and $(y^{(j)}/\|y^{(j)}\|_2, -1)$; hence, the cosine of the angle between the two cuts is
\[\frac{{y^{(i)}}^\top y^{(j)} + \|y^{(i)}\|_2\|y^{(j)}\|_2}{(\sqrt{2}\|y^{(i)}\|_2)(\sqrt{2}\|y^{(j)}\|_2)} = \frac{1}{2}(\frac{{y^{(i)}}^\top y^{(j)}}{\|y^{(i)}\|_2\|y^{(j)}\|_2} + 1).\]
Consequently, the two SOC cuts are classified as too parallel exactly when 
\[\frac{1}{2}(\frac{{y^{(i)}}^\top y^{(j)}}{\|y^{(i)}\|_2\|y^{(j)}\|_2} + 1) > 1-\epsilon;\]
\[\iff \frac{{y^{(i)}}^\top y^{(j)}}{\|y^{(i)}\|_2\|y^{(j)}\|_2} > 1 - 2\epsilon\]
Therefore, in our cut management, $\delta_{\max} = 1-2\epsilon$ provides a natural maximal value of the angular threshold in Problem~\eqref{eq:new_cut}. Increasing $\delta$ beyond $\delta_{\max}$ would no longer guarantee that a newly generated cut satisfies the prescribed parallel-cut criterion. In other words, once $\delta$ is beyond $\delta_{\max}$, we switch our cut management method to the classical parallel-cut management, i.e., reject the cut nearly parallel to existing cuts.

This cut-management rule may, however, leave a nonzero residual SOC violation. Consider an infeasible point $(\bar x, \bar s)\in P\backslash L^N$. Suppose that the deepest direction $\bar x/\|\bar x\|_2$ is classified as too parallel to some existing supporting direction $y^{(i)}/\|y^{(i)}\|_2$, i.e.,
\[\frac{\bar x^\top y^{(i)}}{\|\bar x\|_2\|y^{(i)}\|_2} \geq 1-2\epsilon.\]
Since $(\bar x, \bar s)\in P$, it satisfies the existing cuts. Consequently,
\begin{equation*}
    \begin{aligned}
        \|\bar x\|_2 - \bar s &\leq \|\bar x\|_2 - (y^{(i)}/\|y^{(i)}\|_2)^\top \bar x\\
        &= \|\bar x\|_2 (1-\frac{\bar x^\top y^{(i)}}{\|\bar x\|_2\|y^{(i)}\|_2})\\
        &\leq 2\epsilon\|\bar x\|_2.
    \end{aligned}
\end{equation*}
Thus, rejecting the deepest cut because of the parallelism guarantees a small SOC violation. But it does not necessarily guarantee a prescribed relative or absolute feasibility tolerance. If we set the relative feasibility tolerance as $\epsilon_{R}$, then, considering $\bar s < \|\bar x\|_2$ for an infeasible solution, the above bound guarantees
\[\frac{\|\bar x\|_2 - \bar s}{\max\{1, \|\bar x\|_2, \bar s\}} \leq \epsilon_R\]
whenever $2\epsilon \leq \epsilon_R$. However, the parallelism tolerance used in practice need not be sufficiently small to provide such a guarantee.

Furthermore, for some solvers like Gurobi \citep{gurobi} and BARON \citep{sahinidis1996baron}, there is also an absolute feasibility tolerance $\epsilon_A$. The above bound guarantees
\[\|\bar x\|_2 - \bar s \leq \epsilon_A\]
whenever $\|\bar x\|_2 \leq \frac{\epsilon_A}{2\epsilon}$. For larger $\|\bar x\|_2$, the bound alone does not ensure an absolute $\epsilon_A$-feasibility.

Hence, when Alg.~\ref{Alg:cut_generation} cannot obtain an additional admissible separating direction before reaching the parallelism threshold, the current OA solution may retain a small but non-negligible SOC violation. This illustrates the tension between avoiding nearly redundant cuts and recovering a sufficiently accurate primal feasible solution. To address this issue without sacrificing the proposed cut-management criterion, Section~\ref{sec:progress_int} supplements the outer approximation with the fixed-integer conic primal-recovery step.

\section{Algorithms}\label{app:whole_algorithm}

\subsection{OA algorithm}

The polyhedral outer approximation algorithm is summarized as Alg.~\ref{Alg:oa}.

\begin{algorithm}[!htbp]
	\SetAlgoLined
	\LinesNumbered
	\SetKwRepeat{Do}{do}{while}
    Set $\bar x := \arg\min_{x}\{c^\top x: x\in \mathcal R\}$\;
    \While{$\bar x \not\in \mathcal C \cap \mathcal X$}{
        Strengthen $\mathcal R$ by $\bar x$\;
        Set $\bar x := \arg\min_{x}\{c^\top x: x\in \mathcal R\}$\;
        \tcp{Conditions for an earlier termination}
    }
	\textbf{return} $\bar x$.
	\caption{Outer Approximation for MISOCP}
	\label{Alg:oa}
\end{algorithm}

\subsection{Cut generation algorithm with cut management}

The cut generation and cut management algorithm is summarized in Alg.~\ref{Alg:cut_generation}. 

In line 2, the algorithm begins with the deepest direction. If it satisfies all angular constraints, it is retained. Otherwise, the direction is rotated away from the closest conflicting existing cut according to \eqref{eq:rotate_cut} in line 4. 

In line 6, if the resulting direction satisfies the angular conditions but no longer separates $(\bar x, \bar s)$, $\delta$ is increased to relax the efficiency requirement. 

In line 10, if the selected value of $\delta$ does not guarantee a singleton active set, it is increased to the threshold of Theorem~\ref{thm:new_cut_rotating} and the construction is repeated. 

Finally, in line 12, if $\delta > 1 -2\epsilon$, we turn to the classical parallel-cut management. (See the discussion in Appendix~\ref{app:too_parallel}.)

\begin{algorithm}[!htbp]
	\SetAlgoLined
	\LinesNumbered
	\SetKwRepeat{Do}{do}{while}
	\KwIn{Initial $\delta_0$, infeasible solution $(\bar x, \bar s)$, $y^{(1)},\ldots,y^{(m)}$, and tolerance $\epsilon$.}
    Set $\delta := \delta_0$, $y^* = \bar x/\|\bar x\|_2$\;
    \While{$\exists i\in\{1,...,m\}, {y^*}^\top y^{(i)} > \delta$}{
        Set $i^* := \arg\max_{i}\frac{\bar x^\top y^{(i)}}{\|\bar x\|_2}$\;
        Set $y^* := \delta y^{(i^*)} + \sqrt{1-\delta^2}\frac{(I - y^{(i^*)}{y^{(i^*)}}^\top)y^*}{\|(I - y^{(i^*)}{y^{(i^*)}}^\top)y^*\|_2}$\;
        \eIf{${y^*}^\top y^{(i)} \leq \delta, \forall i\in\{1,...,m\}$}{
            \If{${y^*}^\top \bar x\leq \bar s$}{
                Set $\delta := \sqrt{\frac{\delta + 1}{2}}$, $y^* = \bar x/\|\bar x\|_2$\;}
        }{
            Set $\delta := \max_{i\neq j}\sqrt{\frac{{y^{(i)}}^\top y^{(j)}+ 1}{2}}$, $y^* = \bar x/\|\bar x\|_2$\;
        }
        \If{$\delta > 1-2\epsilon$}{
            \eIf{${y^*}^\top y^{(i)} \leq 1-2\epsilon,\ \forall i =1,...,m$}
            {\textbf{return} Cut $(y^*)^\top x \leq \|y^*\|_2\cdot s$, $\delta$.
            }{
                \textbf{return} NA\;
            }
        }
    }
	\textbf{return} Cut $(y^*)^\top x \leq \|y^*\|_2\cdot s$, $\delta$.
	\caption{Cut Generation Algorithm with Cut Management}
	\label{Alg:cut_generation}
\end{algorithm}

\subsection{Cut generation for one OA iteration}

In each iteration, if we obtain an infeasible solution, we generate cuts proposed in Section~\ref{sec:cutting_plane} for some violated cone constraints, as in Alg.~\ref{Alg:cut_generate_iteration}, and we call Alg.~\ref{Alg:cut_generation} as a subroutine for one second-order cone constraint.

\begin{algorithm}[!htbp]
	\SetAlgoLined
	\LinesNumbered
	\SetKwRepeat{Do}{do}{while}
	\KwIn{Parameters $\epsilon_\mathrm{tol}$, $p_\mathrm{cut}$, $\delta$, infeasible solution $\bar x$, relaxation $\mathcal P$, and the cone set $\mathcal C$.}
    Initialize 
    Check for $\epsilon_{\mathrm{tol}}$-violated cone constraints from $\mathcal C$\;
    Select $p_{\mathrm{cut}}$ most violated cone constraints\;
    For each selected constraint, call Alg.~\ref{Alg:cut_generation}. If Alg.~\ref{Alg:cut_generation} return cut and $\delta$, then update $\mathcal P$ and $\delta$\;
	\textbf{return} $\mathcal P, \delta$.
	\caption{Proposed Cut Generation for One Iteration}
	\label{Alg:cut_generate_iteration}
\end{algorithm}

In Alg.~\ref{Alg:cut_generate_iteration}, the parameter $\epsilon_\mathrm{tol}$ is the constraint feasibility tolerance. To control the size of generated cuts in each iteration, we only consider the $p_{\mathrm{cut}}$ most violated cone constraints in line 2. We note that this strategy has been tested in \citep{bienstock2026accurate,dai2026solving,dai2026scheduling}.

If we do not use the proposed cut and the cut management, then in line 2, we generate cuts defined in Proposition~\ref{prop:deepest_cut} and also reject the nearly parallel cuts with the same threshold $1-2\epsilon$.

\subsection{Progressive-Integrality outer-approximation method}

We modify Alg.~\ref{Alg:oa} by adding the LP stage, the IG stage, and the inner approximation trick proposed in Section~\ref{sec:progress_int} to get the progressive integrality outer approximation method for MISOCPs as Alg.~\ref{Alg:whole_algorithm}. We call Alg.~\ref{Alg:cut_generation} a subroutine in each iteration to strengthen the relaxation. In line 16, we check the relative constraint feasibility with $\epsilon_R = 2\epsilon$. As discussed in Appendix~\ref{app:too_parallel}, with this setup, the cut generation method can find a feasible solution in the sense of relative constraint $\epsilon_R$-feasibility.

\begin{algorithm}[!htbp]
	\SetAlgoLined
	\LinesNumbered
	\SetKwRepeat{Do}{do}{while}
	\KwIn{Original MISOCP, parameters $\epsilon, \epsilon_{\mathrm{LP}},\epsilon_{\mathrm{IG}},\epsilon_{\mathrm{MILP}},\epsilon_{\mathrm{converge}}, p_{\mathrm{frac}},\delta_0$, $\mathrm{max\_iter}$}
    Set $\delta = \delta_0$, $\mathrm{UB} := +\infty$, $\mathrm{LB} := -\infty$, $\mathrm{LB}_0 := -M$, $\mathcal P := \mathbb R^n$, and $\mathcal I:=\emptyset$\;
    \tcp{LP Stage}
    \Do{$\frac{|\mathrm{LB} - \mathrm{LB}_0|}{\max\{1, |\mathrm{LB}|, |\mathrm{LB}_0|\}}\geq \epsilon_{\mathrm{LP}}$}{
        Set $\bar x := \mathcal R_{\mathrm{LP}}$, $\mathrm{LB}_0:=\mathrm{LB}$, and $\mathrm{LB} := c^\top \bar x$\;
        Call Alg.~\ref{Alg:cut_generate_iteration} to update $\mathcal P$ and $\delta$\;
    }
    \tcp{IG Stage}
    \Do{$\frac{|\mathrm{LB} - \mathrm{LB}_0|}{\max\{1, |\mathrm{LB}|, |\mathrm{LB}_0|\}}\geq \epsilon_{\mathrm{IG}}$}{
        Select $p_{\mathrm{frac}}$ most fractional $\bar x_i$ over $\mathcal J$ and push their indexes to $\mathcal I$\;
        Set $\bar x := \mathcal R_{\mathrm{IG}}(\mathcal I)$, $\mathrm{LB}_0:=\mathrm{LB}$, and $\mathrm{LB} := c^\top \bar x$\;
        Call Alg.~\ref{Alg:cut_generate_iteration} to update $\mathcal P$ and $\delta$\;
        \If{$\mathcal I = \mathcal J$}{
            \textbf{Break}\;
        }
    }
    \tcp{Full MILPs}
    \For{$\mathrm{iter}\ = 1:\mathrm{max\_iter}$}{
        Set $\bar x := \mathcal R$, $\mathrm{LB}_0:=\mathrm{LB}$, and $\mathrm{LB} := c^\top \bar x$\;
        \tcp{Relative Feasibility Check}
        \If{$\bar x\ \mathrm{is}\ 2\epsilon\mathrm{-feasibility}$}{
            \textbf{return} $\bar x$.
        }
        Call Alg.~\ref{Alg:cut_generate_iteration} to update $\mathcal P$ and $\delta$\;
        \tcp{Inner Approximation}
        \If{$\frac{|\mathrm{LB} - \mathrm{LB}_0|}{\max\{1, |\mathrm{LB}|, |\mathrm{LB}_0|\}} < \epsilon_{\mathrm{MILP}}$}{
            \If{$\bar{\mathcal{R}}(\bar x)\ \mathrm{is}\ \mathrm{feasbile} \land c^\top \hat x < \mathrm{UB}$}{
                Set $\hat x := \bar{\mathcal{R}}(\bar x)$, $\mathrm{UB}:=c^\top \hat x$\;
            }
        }
        \If{$\frac{|\mathrm{UB} - \mathrm{LB}|}{\max\{1, |\mathrm{UB}|, |\mathrm{LB}|\}} \leq \epsilon_{\mathrm{converge}}$}{
            \textbf{return} $\hat x$\;
        }
    }
    \textbf{return} $\hat x$.
	\caption{Progressive Integrality Outer Approximation for MISOCP}
	\label{Alg:whole_algorithm}
\end{algorithm}

\section{Detailed Experiments} \label{app:experiment}

\subsection{Detailed Setup} \label{app:setup}

\paragraph{Data} Experiments are conducted on two groups of data: (1) CBLIB \citep{friberg2016cblib}; (2) the Central Illinois 200-bus test system and the South Carolina 500-bus test system \citep{birchfield2017}. The data and additional details for the power-system instances can be found in \citep[Appendix. E]{flores2022scheduling}.

\paragraph{Software and Hardware} All algorithms are implemented in Julia v1.11.5 \citep{bezanson2017julia} using JuMP v1.25.0 \citep{jump}. LP and MILP subproblems are solved by SCIP v8.0.2 \citep{scip1,scip2} or Gurobi v13.0.2 \citep{gurobi}, while continuous conic subproblems are solved by Mosek v11.0.9 \citep{mosek}. The experiments are conducted on a 64-bit Windows 11 desktop with an AMD Ryzen R5-5600G 3.90 GHz CPU (6 physical cores and 12 logical processors) and 32 GB RAM.

\paragraph{Configurations} For Alg.~\ref{Alg:cut_generation}, we set $\delta_0 = 0.99$, the parallel tolerance $\epsilon = (1e-5)/2$. For Alg.~\ref{Alg:cut_generate_iteration}, we follow \citep{bienstock2026accurate}, i.e., $\epsilon_{\mathrm{tol}} :=1e-5$ and $p_{\mathrm{cut}} :=55\%$. For Alg.~\ref{Alg:whole_algorithm}, we set $\epsilon_{\mathrm{LP}} = 1\%$, $\epsilon_{\mathrm{IG}} = 2\%$, $\epsilon_{\mathrm{MILP}} = 1\%$, $p_{\mathrm{frac}} = 10\%$. We set the maximum number of iterations as $\mathrm{max\_iter} := 25$. The target optimality gap $\epsilon_{\mathrm{convergence}}$ is specified separately for each experiment.

\paragraph{Performance Measurement} The optimality gap reported is
\[\mathrm{Gap} := \frac{|\mathrm{UB} - \mathrm{LB}|}{\max\{1, |\mathrm{UB}|, |\mathrm{LB}|\}}.\]
For methods A and B, the runtime reduction of A relative to B is reported as
\[\mathrm{Speedup} := \frac{\mathrm{Time}(B) - \mathrm{Time}(A)}{\mathrm{Time}(B)}.\]
For the mean value of a list $t = [t_1, ..., t_n]$, we adopt the geometric mean with a shift of 1 as
\[\mathrm{mean}\ \mathrm{value} := (\prod_{i=1}^n (t_i + 1))^{\frac{1}{n}} - 1.\]

\subsection{Detailed Illustrative Instances}\label{app:small}

The illustrative instances are sufficiently small that the time required for continuous SOCP solves and cut generation is negligible relative to the LP/MILP solution time; together, these operations require less than one second on average. Since the purpose of these experiments is to examine how progressive integrality and cut management affect the subsequent MILP solves, the runtime reported in Table~\ref{tab:small} and \ref{tab:small_detail} includes only LP and MILP solution times.

The columns \# LP, \# IG, and \# MIP denote the numbers of LP, partial-MILP, and full-MILP solves, respectively. MTime and MNodes measure the difficulty of the full MILPs encountered after the progressive stages. More precisely, if POA performs $n_1$ LP iterations and $n_2$ IG iterations before entering the full-MILP stage, MTime and MNodes are computed from the full MILPs solved after iteration $n_1 + n_2$. For the corresponding OA run, the same initial $n_1 + n_2$ iterations are excluded, so that the quantities compare MILPs encountered at comparable stages of the OA process.

We set the target optimality gap $\epsilon_{\mathrm{convergence}} = 1e-3$ in this test set.

\begin{table}[!htbp] 
    \centering 
    \small
    \setlength{\tabcolsep}{2.2pt}
    \begin{tabular}{l|l|c|ccc|cc|c} 
        \toprule[1pt] 
        Instance  &Method &Runtime &\# LP &\# IG &\# MIP &MTime &MNodes &Final Gap\\ 
        \midrule
        50\_0\_1\_w &OA &577 &0 &0 &8 &178 &212 &9.87e-4\\
        50\_0\_1\_w &OA-M &384 &0 &0 &7 &170 &129 &9.07e-4\\
        50\_0\_1\_w &POA &398 &4 &1 &3 &78 &27 &9.35e-4\\
        50\_0\_1\_w &POA-M &47 &4 &1 &1 &37 &1 &7.59e-4\\
        \midrule
        50\_0\_2\_w &OA &76 &0 &0 &6 &19 &1 &5.92e-4\\
        50\_0\_2\_w &OA-M &76 &0 &0 &6 &12 &1 &6.23e-4\\
        50\_0\_2\_w &POA &84 &3 &1 &2 &37 &1 &1.52e-4\\
        50\_0\_2\_w &POA-M &61 &3 &1 &1 &51 &1 &3.18e-4\\
        \midrule
        50\_0\_3\_w &OA &362 &0 &0 &7 &170 &298 &3.31e-4\\
        50\_0\_3\_w &OA-M &280 &0 &0 &7 &119 &17 &5.52e-4\\
        50\_0\_3\_w &POA &29 &4 &1 &1 &20 &1 &8.81e-4\\
        50\_0\_3\_w &POA-M &30 &4 &1 &1 &9 &1 &9.89e-4\\
        \midrule
        50\_0\_4\_w &OA &362 &0 &0 &7 &170 &291 &8.63e-4\\
        50\_0\_4\_w &OA-M &363 &0 &0 &7 &170 &27 &8.21e-4\\
        50\_0\_4\_w &POA &210 &4 &1 &2 &57 &27 &9.95e-4\\
        50\_0\_4\_w &POA-M &213 &4 &1 &2 &60 &29 &8.92e-4\\
        \midrule
        50\_0\_5\_w &OA &214 &0 &0 &6 &162 &419 &6.51e-4\\
        50\_0\_5\_w &OA-M &201 &0 &0 &6 &152 &61 &6.50e-4\\
        50\_0\_5\_w &POA &83 &4 &1 &1 &76 &1 &9.99e-4\\
        50\_0\_5\_w &POA-M &62 &4 &1 &1 &54 &1 &9.26e-4\\
        \bottomrule[1pt] 
    \end{tabular}
    \caption{Solution details for illustrative examples.}
    \label{tab:small_detail}
\end{table}

\subsection{Detailed Extended Instances}\label{app:medium}

For the extended CBLIB instances, the reported runtime is the total wall-clock solution time, including optimization subproblems and cut generation, since these experiments compare the proposed methods directly with Gurobi's native MISOCP solution. We use the target optimality gap $\epsilon_{\mathrm{convergence}} = 1e-4$.

For each instance, Runtime denotes the total time required to reach the target gap. The columns 1e-3 Gap and 5e-4 Gap report the first times at which the corresponding optimality gaps are reached. Best denotes the time at which the final best incumbent is first obtained, and Final Gap is the optimality gap at termination.

We report the detailed solutions case by case in Table~\ref{tab:medium_detail_gurobi}, \ref{tab:medium_detail_oa}, \ref{tab:medium_detail_oam}, \ref{tab:medium_detail_poa}, \ref{tab:medium_detail_poam}.

\begin{table}[!htbp] 
    \centering 
    \small
    \setlength{\tabcolsep}{2.2pt}
    \begin{tabular}{l|c|c|ccc|c} 
        \toprule[1pt] 
        Instance &Method &Runtime &1e-3 Gap &5e-4 Gap &Best &Final Gap\\ 
        \midrule
        100\_0\_1\_w &Gurobi &137 &23 &37 &74 &0.0088\%\\
        100\_0\_2\_w &Gurobi &376 &19 &50 &327 &0.0099\%\\
        100\_0\_3\_w &Gurobi &203 &21 &45 &203 &0.0085\%\\
        100\_0\_4\_w &Gurobi &235 &19 &45 &235 &0.01\%\\
        100\_0\_5\_w &Gurobi &183 &12 &39 &183 &0.01\%\\
        150\_0\_1\_w &Gurobi &337 &19 &85 &251 &0.0098\%\\
        150\_0\_2\_w &Gurobi &297 &19 &19 &219 &0.0099\%\\
        150\_0\_3\_w &Gurobi &435 &32 &61 &435 &0.0099\%\\
        150\_0\_4\_w &Gurobi &246 &20 &35 &226 &0.0097\%\\
        150\_0\_5\_w &Gurobi &187 &17 &50 &187 &0.0085\%\\
        200\_0\_1\_w &Gurobi &280 &32 &88 &280 &0.0093\%\\
        200\_0\_2\_w &Gurobi &426 &36 &81 &174 &0.0099\%\\
        200\_0\_3\_w &Gurobi &200 &31 &87 &198 &0.0084\%\\
        200\_0\_4\_w &Gurobi &315 &26 &78 &284 &0.01\%\\
        200\_0\_5\_w &Gurobi &300 &28 &51 &300 &0.0087\%\\
        \bottomrule[1pt] 
    \end{tabular}
    \caption{Gurobi solution details on the extended CBLIB instances.}
    \label{tab:medium_detail_gurobi}
\end{table}

\begin{table}[!htbp] 
    \centering 
    \small
    \setlength{\tabcolsep}{2.2pt}
    \begin{tabular}{l|c|c|ccc|c} 
        \toprule[1pt] 
        Instance &Method &Runtime &1e-3 Gap &5e-4 Gap &Best &Final Gap\\ 
        \midrule
        100\_0\_1\_w &OA &137 &31 &31 &137 &0.0055\%\\
        100\_0\_2\_w &OA &367 &31 &171 &171 &0.0073\%\\
        100\_0\_3\_w &OA &302 &32 &132 &132 &0.0067\%\\
        100\_0\_4\_w &OA &404 &24 &151 &404 &0.0054\%\\
        100\_0\_5\_w &OA &123 &31 &31 &123 &0.0089\%\\
        150\_0\_1\_w &OA &577 &40 &168 &577 &0.0067\%\\
        150\_0\_2\_w &OA &289 &45 &123 &123 &0.0058\%\\
        150\_0\_3\_w &OA &497 &127 &303 &497 &0.0096\%\\
        150\_0\_4\_w &OA &235 &39 &39 &128 &0.0085\%\\
        150\_0\_5\_w &OA &183 &50 &50 &183 &0.0078\%\\
        200\_0\_1\_w &OA &247 &89 &89 &247 &0.0085\%\\
        200\_0\_2\_w &OA &829 &238 &238 &829 &0.0055\%\\
        200\_0\_3\_w &OA &447 &75 &75 &253 &0.0094\%\\
        200\_0\_4\_w &OA &352 &157 &157 &352 &0.0086\%\\
        200\_0\_5\_w &OA &395 &74 &74 &395 &0.0066\%\\
        \bottomrule[1pt] 
    \end{tabular}
    \caption{OA solution details on the extended CBLIB instances.}
    \label{tab:medium_detail_oa}
\end{table}

\begin{table}[!htbp] 
    \centering 
    \small
    \setlength{\tabcolsep}{2.2pt}
    \begin{tabular}{l|c|c|ccc|c} 
        \toprule[1pt] 
        Instance &Method &Runtime &1e-3 Gap &5e-4 Gap &Best &Final Gap\\ 
        \midrule
        100\_0\_1\_w &OA-M &138 &34 &34 &138 &0.0065\% \\
        100\_0\_2\_w &OA-M &297 &36 &126 &126 &0.0069\% \\
        100\_0\_3\_w &OA-M &201 &32 &105 &105 &0.0068\% \\
        100\_0\_4\_w &OA-M &318 &26 &164 &164 &0.0098\% \\
        100\_0\_5\_w &OA-M &331 &24 &100 &331 &0.0075\% \\
        150\_0\_1\_w &OA-M &404 &41 &218 &404 &0.0081\% \\
        150\_0\_2\_w &OA-M &258 &49 &135 &258 &0.0062\% \\
        150\_0\_3\_w &OA-M &256 &153 &153 &256 &0.0098\% \\
        150\_0\_4\_w &OA-M &233 &36 &125 &233 &0.0062\% \\
        150\_0\_5\_w &OA-M &226 &27 &27 &226 &0.0067\% \\
        200\_0\_1\_w &OA-M &225 &71 &225 &225 &0.0098\% \\
        200\_0\_2\_w &OA-M &648 &61 &239 &648 &0.0078\% \\
        200\_0\_3\_w &OA-M &459 &86 &86 &264 &0.0094\% \\
        200\_0\_4\_w &OA-M &343 &149 &149 &343 &0.0098\% \\
        200\_0\_5\_w &OA-M &455 &83 &83 &455 &0.0082\% \\
        \bottomrule[1pt] 
    \end{tabular}
    \caption{OA-M solution details on the extended CBLIB instances.}
    \label{tab:medium_detail_oam}
\end{table}

\begin{table}[!htbp] 
    \centering 
    \small
    \setlength{\tabcolsep}{2.2pt}
    \begin{tabular}{l|c|c|ccc|c} 
        \toprule[1pt] 
        Instance &Method &Runtime &1e-3 Gap &5e-4 Gap &Best &Final Gap\\ 
        \midrule
        100\_0\_1\_w &POA &101 &31 &31 &101 &0.0057\% \\
        100\_0\_2\_w &POA &176 &29 &29 &176 &0.0074\% \\
        100\_0\_3\_w &POA &182 &62 &62 &62 &0.0066\% \\
        100\_0\_4\_w &POA &229 &34 &34 &229 &0.0095\% \\
        100\_0\_5\_w &POA &135 &44 &44 &135 &0.0062\% \\
        150\_0\_1\_w &POA &250 &55 &55 &150 &0.0094\% \\
        150\_0\_2\_w &POA &245 &50 &50 &245 &0.0083\% \\
        150\_0\_3\_w &POA &240 &45 &45 &240 &0.0078\% \\
        150\_0\_4\_w &POA &153 &24 &48 &153 &0.0073\% \\
        150\_0\_5\_w &POA &235 &20 &20 &235 &0.0058\% \\
        200\_0\_1\_w &POA &232 &77 &77 &232 &0.0085\% \\
        200\_0\_2\_w &POA &260 &65 &65 &260 &0.0098\% \\
        200\_0\_3\_w &POA &252 &58 &58 &252 &0.0098\% \\
        200\_0\_4\_w &POA &268 &73 &73 &268 &0.0081\% \\
        200\_0\_5\_w &POA &262 &67 &67 &262 &0.008\% \\
        \bottomrule[1pt] 
    \end{tabular}
    \caption{POA solution details on the extended CBLIB instances.}
    \label{tab:medium_detail_poa}
\end{table}

\begin{table}[!htbp] 
    \centering 
    \small
    \setlength{\tabcolsep}{2.2pt}
    \begin{tabular}{l|c|c|ccc|c} 
        \toprule[1pt] 
        Instance &Method &Runtime &1e-3 Gap &5e-4 Gap &Best &Final Gap\\ 
        \midrule
        100\_0\_1\_w &POA-M &93 &27 &27 &93 &0.0067\% \\
        100\_0\_2\_w &POA-M &206 &61 &61 &61 &0.0088\% \\
        100\_0\_3\_w &POA-M &140 &34 &34 &140 &0.0094\% \\
        100\_0\_4\_w &POA-M &218 &33 &33 &218 &0.0095\% \\
        100\_0\_5\_w &POA-M &112 &35 &35 &35 &0.0084\% \\
        150\_0\_1\_w &POA-M &246 &51 &51 &246 &0.0093\% \\
        150\_0\_2\_w &POA-M &223 &44 &44 &223 &0.0074\% \\
        150\_0\_3\_w &POA-M &240 &45 &45 &240 &0.0079\% \\
        150\_0\_4\_w &POA-M &137 &20 &38 &137 &0.007\% \\
        150\_0\_5\_w &POA-M &204 &40 &40 &204 &0.0062\% \\
        200\_0\_1\_w &POA-M &207 &72 &72 &207 &0.0078\% \\
        200\_0\_2\_w &POA-M &253 &58 &58 &253 &0.007\% \\
        200\_0\_3\_w &POA-M &260 &33 &65 &260 &0.0055\% \\
        200\_0\_4\_w &POA-M &263 &68 &68 &68 &0.0077\% \\
        200\_0\_5\_w &POA-M &229 &56 &56 &229 &0.0074\% \\
        \bottomrule[1pt] 
    \end{tabular}
    \caption{POA-M solution details on the extended CBLIB instances.}
    \label{tab:medium_detail_poam}
\end{table}

\subsection{AC unit-commitment MISOCP formulation}\label{app:formulation}

The formulation of the AC unit commitment problem can be found in \citep[Problem~(1)]{dai2026scheduling}. We set $a_{n,m} = 1$.

\end{document}